\DeclareMathOperator{\ord}{ord} 
\DeclareMathOperator{\trdeg}{trdeg}\DeclareMathOperator{\type}{type}
\begin{document}
\def\D{\displaystyle}

\title{Hilbert-type dimension polynomials of intermediate
difference-differential field extensions\thanks{Supported by the NSF grant CCF-1714425}}
\titlerunning{Hilbert-type dimension polynomials}
%
\author{Alexander Levin}
\authorrunning{A. Levin}
%
\institute{The Catholic University of America, Washington, DC 20064, USA\\
\email{levin@cua.edu}\\
\url{https://sites.google.com/a/cua.edu/levin}}
\maketitle              

\begin{abstract}
Let $K$ be an inversive difference-differential field and $L$ a (not
necessarily inversive) finitely generated difference-differential
field extension of $K$. We consider the natural filtration of the
extension $L/K$ associated with a finite system $\eta$ of its
difference-differential generators and prove that for any
intermediate difference-differential field $F$, the transcendence
degrees of the components of the induced filtration of $F$ are
expressed by a certain numerical polynomial $\chi_{K,
F,\eta}(t)$. This polynomial is closely connected with the dimension
Hilbert-type polynomial of a submodule of the module of K\"ahler
differentials $\Omega_{L^{\ast}|K}$ where $L^{\ast}$ is the
inversive closure of $L$. We prove some properties of polynomials
$\chi_{K, F,\eta}(t)$ and use them for the study of the Krull-type
dimension of the extension $L/K$. In the last part of the paper, we
present a generalization of the obtained results to multidimensional
filtrations of $L/K$ associated with partitions of the sets of basic
derivations and translations.

\medskip

{\bf Keywords:} Difference-differential field,
difference-differential module, K\"ahler differentials, dimension
polynomial.
\end{abstract}

\section{Introduction}

Dimension polynomials associated with finitely generated
differential field extensions were introduced by E. Kolchin in
\cite{K1}; their properties and various applications can be found in
his fundamental monograph \cite[Chapter 2]{K2}. A similar technique
for difference and inversive difference field extensions was
developed in \cite{Levin1}, \cite{Levin2}, \cite{Levin5},
\cite{Levin6} and some other works of the author. Almost all known
results on differential and difference dimension polynomials can be
found in \cite{KLMP} and \cite{Levin3}. One can say that the role of
dimension polynomials in differential and difference algebra is
similar to the role of Hilbert polynomials in commutative algebra
and algebraic geometry. The same can be said about dimension
polynomials associated with difference-differential algebraic
structures. They appear as generalizations of their differential and
difference counterparts and play a key role in the study of
dimension of difference-differential modules and extensions of
difference-differential fields. Existence theorems, properties and
methods of computation of univariate and multivariate
difference-differential dimension polynomials can be found in
\cite{LM}, \cite[Chapters 6 and 7]{KLMP}, \cite{Levin7}, \cite{ZW1}
and \cite{ZW2}.

In this paper we prove the existence and obtain some properties of a
univariate dimension polynomial associated with an intermediate
difference-differential field of a finitely generated
difference-differential field extension (see Theorem 2 that can be
considered as the main result of the paper). Then we use the
obtained results for the study of the Krull-type dimension of such
an extension. In particular, we establish relationships between
invariants of dimension polynomials and characteristics of
difference-differential field extensions that can be expressed in
terms of chains of intermediate fields. In the last part of the
paper we generalize our results on univariate dimension polynomials
and obtain multivariate dimension polynomials associated with
multidimensional filtrations induced on intermediate
difference-differential fields. (Such filtrations naturally arise
when one considers partitions of the sets of basic derivations and
translations.) Note that we consider arbitrary (not necessarily
inversive) difference-differential extensions of an inversive
difference-differential field. In the particular case of purely
differential extensions and in the case of  inversive difference
field extensions, the existence and properties of dimension
polynomials were obtained in \cite{Levin4} and \cite{Levin6}. The
main problem one runs into while working with a non-inversive
difference (or difference-differential) field extension is that the
translations are not invertible and there is no natural difference
(respectively, difference-differential) structure on the associated
module of K\"ahler differentials. We overcome this obstacle by
considering such a structure on the module of K\"ahler differentials
associated with the inversive closure of the extension. Finally, the
results of this paper allow one to assign a dimension polynomial to
a system of algebraic difference-differential equations of the form
$f_{i} = 0$, $i\in I$ ($f_{i}$ lie in the algebra of
difference-differential polynomials $K\{y_{1},\dots, y_{n}\}$ over a
ground field $K$) such that the difference-differential ideal $P$
generated by the left-hand sides is prime and the solutions of the
system should be invariant with respect to the action of a group $G$
that commutes with basic derivations and translations. As in the
case of systems of differential or difference equations, the
dimension polynomial of such a system is defined as the dimension
polynomial of the subfield of the difference-differential quotient
field $K\{y_{1},\dots, y_{n}\}/P$ whose elements remain fixed under
the action of $G$. Using the correspondence between dimension
polynomials and Einstein's strength of a system of algebraic
differential or difference equations established in \cite{MP} and
\cite[Chapter 6]{KLMP} (this characteristic of a system of PDEs
governing a physical field was introduced in \cite{E}), one can
consider this dimension polynomial as an expression of the
Einstein's strength of a system of difference-differential equations
with group action.

\section{Preliminaries}

Throughout the paper $\mathbb{Z}$, $\mathbb{N}$ and $\mathbb{Q}$
denote the sets of all integers, all non-negative integers and all
rational numbers, respectively. As usual, $\mathbb{Q}[t]$ will
denote the ring of polynomials in one variable $t$ with rational
coefficients. By a ring we always mean an associative ring with a
unity. Every ring homomorphism is unitary (maps unit onto unit),
every subring of a ring contains the unity of the ring. Every module
is unitary and every algebra over a commutative ring is unitary as
well. Every field is supposed to have characteristic zero.

A {\em difference-differential ring} is a commutative ring $R$
considered together with finite sets $\Delta = \{\delta_{1},\dots,
\delta_{m}\}$ and $\sigma = \{\alpha_{1},\dots, \alpha_{n}\}$ of
derivations and injective endomorphisms of $R$, respectively, such
that any two mappings of the set $\Delta\bigcup\sigma$ commute. The
elements of the set $\sigma$ are called {\em translations} and the
set $\Delta\bigcup\sigma$ will be referred to as a {\em basic set}
of the difference-differential ring $R$, which is also called a
$\Delta$-$\sigma$-ring. We will often use prefix $\Delta$-$\sigma$-
instead of the adjective ''difference-differential''. If all
elements of $\sigma$ are automorphisms of $R$, we say that the
$\Delta$-$\sigma$-ring $R$ is {\em inversive}. In this case we set
$\sigma^{\ast} = \{\alpha_{1},\dots, \alpha_{n},
\alpha^{-1}_{1},\dots, \alpha^{-1}_{n}\}$ and call $R$ a
$\Delta$-$\sigma^{\ast}$-ring.

If a $\Delta$-$\sigma$-ring $R$ is a field, it is called a {\em
difference-differential field} or a $\Delta$-$\sigma$-field. If $R$
is inversive, we say that $R$ is a $\Delta$-$\sigma^{\ast}$-field.

In what follows, $\Lambda$ will denote the free commutative
semigroup of all power products $\lambda = \delta_{1}^{k_{1}}\dots
\delta_{m}^{k_{m}}\alpha_{1}^{l_{1}}\dots \alpha_{n}^{l_{n}}$ where
$k_{i}, l_{j}\in\mathbb{N}$ ($1\leq i\leq m,\, 1\leq j\leq n$).
Furthermore,  $\Theta$ and $T$ will denote the commutative
semigroups of power products $\delta_{1}^{k_{1}}\dots
\delta_{m}^{k_{m}}$ and $\alpha_{1}^{l_{1}}\dots \alpha_{n}^{l_{n}}$
($k_{i}, l_{j}\in\mathbb{N}$), respectively. If $\lambda =
\delta_{1}^{k_{1}}\dots \delta_{m}^{k_{m}}\alpha_{1}^{l_{1}}\dots
\alpha_{n}^{l_{n}}\in\Lambda$, we define the order of $\lambda$ as
$\ord\,\lambda = \sum_{i=1}^{m}k_{i}+\sum_{j=1}^{n}l_{j}$ and set
$\Lambda(r) = \{\lambda\in\Lambda\,|\,\ord\,\lambda\leq r\}$ for any
$r\in\mathbb{N}$.

If the elements of $\sigma$ are automorphisms, then $\Lambda^{\ast}$
and $\Gamma$ will denote the free commutative semigroup of all power
products $\mu = \delta_{1}^{k_{1}}\dots
\delta_{m}^{k_{m}}\alpha_{1}^{l_{1}}\dots \alpha_{n}^{l_{n}}$ with
$k_{i}\in\mathbb{N}$, $l_{j}\in\mathbb{Z}$ and the free commutative
group of power products $\gamma=\alpha_{1}^{l_{1}}\dots
\alpha_{n}^{l_{n}}$ with $l_{1},\dots, l_{n}\in\mathbb{Z}$,
respectively. The order of such elements $\mu$ and $\gamma$ are
defined as $\ord\,\lambda =
\sum_{i=1}^{m}k_{i}+\sum_{j=1}^{n}|l_{j}|$ and $\ord\,\gamma =
\sum_{j=1}^{n}|l_{j}|$, respectively. We also set $\Lambda^{\ast}(r)
= \{\mu\in\Lambda^{\ast}\,|\,\ord\,\mu\leq r\}$ ($r\in\mathbb{N}$).

A subring (ideal) $S$ of a $\Delta$-$\sigma$-ring $R$ is said to be
a difference-differential  (or $\Delta$-$\sigma$-) subring of $R$
(respectively, difference-differential (or $\Delta$-$\sigma$-) ideal
of $R$) if $S$ is closed with respect to the action of any operator
of $\Delta\bigcup\sigma$. In this case the restriction of a mapping
from $\Delta\bigcup\sigma$ on $S$ is denoted by same symbol. If $S$
is a $\Delta$-$\sigma$-subring $R$, we also say that $R$ is a
$\Delta$-$\sigma$-overring of $S$. If $S$ is a
$\Delta$-$\sigma$-ideal of $R$ and for any $\tau\in T$, the
inclusion $\tau(a)\in S$ implies that $a\in S$, we say that the
$\Delta$-$\sigma$-ideal $S$ is {\em reflexive} or that $S$ is a
$\Delta$-$\sigma^{\ast}$-ideal of $R$.

If $L$ is a $\Delta$-$\sigma$-field and $K$ a subfield of $L$ which
is also a $\Delta$-$\sigma$-subring of $L$, then  $K$ is said to be
a $\Delta$-$\sigma$-subfield of $L$; $L$, in turn, is called a
difference-differential (or $\Delta$-$\sigma$-) field extension or a
$\Delta$-$\sigma$-overfield of $K$. In this case we also say that we
have a $\Delta$-$\sigma$-field extension $L/K$.

If $R$ is a $\Delta$-$\sigma$-ring and $S\subseteq R$, then the
intersection of all $\Delta$-$\sigma$-ideals of $R$ containing the
set $S$ is, obviously, the smallest $\Delta$-$\sigma$-ideal of $R$
containing $S$. This ideal is denoted by $[S]$; as an ideal, it is
generated by the set $\{\lambda(x)\,|\,x\in S, \lambda\in
\Lambda\}$. If $S$ is finite, $S = \{x_{1},\dots, x_{k}\}$, we say
that the $\Delta$-$\sigma$-ideal $I = [S]$ is finitely generated,
write $I = [x_{1},\dots, x_{k}]$ and call $x_{1},\dots, x_{k}$
$\Delta$-$\sigma$-generators of $I$.

If $K$ is a $\Delta$-$\sigma$-subfield of the
$\Delta$-$\sigma$-field $L$ and $S\subseteq L$, then the
intersection of all $\Delta$-$\sigma$-subfields of $L$ containing
$K$ and $S$ is the unique $\Delta$-$\sigma$-subfield of $L$
containing $K$ and $S$ and contained in every
$\Delta$-$\sigma$-subfield of $L$ with this property. It is denoted
by $K\langle S\rangle$. If $S$ is finite, $S =
\{\eta_{1},\dots,\eta_{s}\}$ we write
$K\langle\eta_{1},\dots,\eta_{s}\rangle$ for $K\langle S\rangle$ and
say that this is a finitely generated $\Delta$-$\sigma$-extension of
$K$ with the set of $\Delta$-$\sigma$-generators
$\{\eta_{1},\dots,\eta_{s}\}$.  It is easy to see that $K\langle
\eta_{1},\dots,\eta_{s}\rangle$ coincides with the field
$K(\{\lambda \eta_{i}\,|\,\lambda \in \Lambda, 1\leq i\leq s\})$.
(If there might be no confusion, we often write $\lambda\eta$ for
$\lambda(\eta)$ where $\lambda\in\Lambda$ and $\eta$ is an element
of a $\Delta$-$\sigma$-ring.)

Let $R_{1}$ and $R_{2}$ be two difference-differential rings with
the same basic set $\Delta\bigcup\sigma$. (More rigorously, we
assume that there exist injective mappings of the sets $\Delta$ and
$\sigma$ into the sets of derivations and automorphisms of the rings
$R_{1}$ and $R_{2}$, respectively, such that the images of any two
elements of $\Delta\bigcup\sigma$ commute. We will
denote the images of elements of $\Delta\bigcup\sigma$ under these
mappings by the same symbols $\delta_{1},\dots, \delta_{m},
\alpha_{1},\dots, \alpha_{n}$). A ring homomorphism $\phi: R_{1}
\longrightarrow R_{2}$ is called a {\em difference-differential} (or
$\Delta$-$\sigma$-) {\em homomorphism} if $\phi(\tau a) = \tau
\phi(a)$ for any $\tau \in \Delta\bigcup\sigma$, $a\in R$. It is
easy to see that the kernel of such a mapping is a
$\Delta$-$\sigma^{\ast}$-ideal of $R_{1}$.

If $R$ is a $\Delta$-$\sigma$-subring of a $\Delta$-$\sigma$-ring
$R^{\ast}$ such that the elements of $\sigma$ act as automorphisms
of $R^{\ast}$ and for every $a\in R^{\ast}$ there exists $\tau\in T$
such that $\tau(a)\in R$, then the $\Delta$-$\sigma^{\ast}$-ring
$R^{\ast}$ is called the {\em inversive closure} of $R$.

The proof of the following result can be obtained by mimicking the
proof of the corresponding statement about inversive closures of
difference rings, see \cite[Proposition 2.1.7]{Levin3}.
\begin{proposition}
\noindent{\em (i)}\, Every $\Delta$-$\sigma$-ring has an inversive closure.

\noindent{\em (ii)}\, If $R^{\ast}_{1}$ and $R^{\ast}_{2}$ are two
inversive closures of a $\Delta$-$\sigma$-ring $R$, then there
exists a $\Delta$-$\sigma$-isomorphism of $R^{\ast}_{1}$ onto
$R^{\ast}_{2}$ that leaves elements of $R$ fixed. .

\noindent{\em (iii) }\, If a $\Delta$-$\sigma$-ring $R$ is a
$\Delta$-$\sigma$-subring of a $\Delta$-$\sigma^{\ast}$-ring $U$,
then $U$ contains an inversive closure of $R$.

\noindent{\em (iv)}\,  If a $\Delta$-$\sigma$-ring $R$ is
a field, then its inversive closure is also a field.
\end{proposition}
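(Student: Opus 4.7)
The plan is to mimic the classical construction for difference rings from \cite[Proposition 2.1.7]{Levin3}, adapting it to accommodate the commuting derivations. The key observation is that since each $\delta_{i}$ commutes with every $\alpha_{j}$, any construction extending the translations to automorphisms automatically carries the derivations along.

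For (i), I would build $R^{\ast}$ as a direct limit. Partially order $T$ by divisibility ($\tau_{1}\preceq\tau_{2}$ means $\tau_{2}=\tau\tau_{1}$ for some $\tau\in T$); commutativity of $T$ makes the order directed. For each $\tau\in T$ take a copy $R_{\tau}=R$, with transition map $\tau: R_{\tau_{1}}\to R_{\tau_{2}}$ whenever $\tau_{1}\preceq\tau_{2}$, and set $R^{\ast}=\varinjlim R_{\tau}$, identifying $R$ with the canonical image of $R_{1}$. Each $\alpha_{j}$ induces an index shift that becomes an automorphism of $R^{\ast}$; each $\delta_{i}$ extends by acting on each $R_{\tau}$, which is compatible with the transitions precisely because $\delta_{i}$ commutes with every element of $T$. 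By construction, every element of $R^{\ast}$ is the image of some $a\in R_{\tau}$, so some $\tau\in T$ sends it into $R$.

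For (ii), given two inversive closures $R^{\ast}_{1}, R^{\ast}_{2}$, define $\phi: R^{\ast}_{1}\to R^{\ast}_{2}$ by $\phi(a)=\tau^{-1}(\tau(a))$, where $\tau\in T$ is chosen with $\tau(a)\in R$ and $\tau^{-1}$ is computed in $R^{\ast}_{2}$. Independence of the choice of $\tau$ follows from the injectivity of the translations, and a direct verification shows $\phi$ is a $\Delta$-$\sigma$-homomorphism fixing $R$; the symmetric formula yields its inverse. For (iii), the set $U'=\{u\in U\,|\,\tau(u)\in R\text{ for some }\tau\in T\}$ is easily seen to be a $\Delta$-$\sigma^{\ast}$-subring of $U$; since it contains $R$ and every element is sent into $R$ by some $\tau\in T$, it is an inversive closure of $R$. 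For (iv), if $R$ is a field and $a\in R^{\ast}$ is nonzero, pick $\tau\in T$ with $\tau(a)\in R$; injectivity of $\tau$ forces $\tau(a)\neq 0$, hence $\tau(a)$ is a unit in $R$, and $\tau^{-1}(\tau(a)^{-1})\in R^{\ast}$ is an inverse of $a$.

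The principal subtlety lies in (i): one must verify that the $\delta_{i}$ descend to well-defined derivations on the direct limit and commute with the $\alpha_{j}^{\pm 1}$ as well as with one another. This is a routine diagram chase across the transition maps, but it is the only place where the derivation-theoretic hypotheses enter nontrivially. The remaining parts are standard consequences of the universal property of the direct limit together with the injectivity of translations.
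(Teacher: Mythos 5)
Your proposal is correct and follows essentially the same route as the paper, which simply defers to the standard inversive-closure construction for difference rings (Levin, \emph{Difference Algebra}, Proposition 2.1.7) and notes it carries over to the difference-differential case; your direct-limit construction, the compatibility of the derivations with the transition maps, and the arguments for (ii)--(iv) are exactly that adaptation.
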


If $K$ is an inversive difference-differential field and $L =
K\langle \eta_{1},\dots,\eta_{s}\rangle$, then the inversive closure
of $L$ is denoted by $K\langle
\eta_{1},\dots,\eta_{s}\rangle^{\ast}$. Clearly, this
$\Delta$-$\sigma^{\ast}$-field coincides with the field $K(\{\mu
\eta_{i} | \mu\in\Lambda^{\ast}, 1\leq i\leq s\})$.

Let $R$ be a $\Delta$-$\sigma$-ring and $U = \{u_{i}\,|\,i\in I\}$ a
family of elements of some $\Delta$-$\sigma$-overring of $R$. We say
that the family $U$ is $\Delta$-$\sigma$-{\em algebraically
dependent} over $R$, if the family $\left\{\lambda
u_{i}\,|\,\lambda\in\Lambda,\, i\in I\right\}$ is algebraically
dependent over $R$. Otherwise, the family $U$ is said to be
$\Delta$-$\sigma$-{\em algebraically independent} over $R$.

If $K$ is a $\Delta$-$\sigma$-field and $L$ a
$\Delta$-$\sigma$-field extension of $K$, then a set $B\subseteq L$
is said to be a $\Delta$-$\sigma$-{\em transcendence basis} of $L$
over $K$ if $B$ is $\Delta$-$\sigma$-algebraically independent over
$K$ and every element $a\in L$ is $\Delta$-$\sigma$-algebraic over
$K\langle B\rangle$ (that is, the set $\{\lambda
a\,|\,\lambda\in\Lambda\}$ is algebraically dependent over $K\langle
B\rangle$). If $L$ is a finitely generated $\Delta$-$\sigma$-field
extension of $K$, then all $\Delta$-$\sigma$-transcendence bases of
$L$ over $K$ are finite and have the same number of elements (the
proof of this fact can be obtained by mimicking the proof of the
corresponding properties of difference transcendence bases, see
\cite[Section 4.1]{Levin3}). In this case, the number of elements of
any $\Delta$-$\sigma$-transcendence basis is called the {\em
difference-differential}  (or $\Delta$-$\sigma$-) {\em transcendence
degree} of $L$ over $K$ (or the $\Delta$-$\sigma$-transcendence
degree of the extension $L/K$); it is denoted by
$\Delta$-$\sigma$-$\trdeg_{K}L$.

The following theorem proved in \cite{LM} generalizes the Kolchin's
theorem on differential dimension polynomial (see \cite[Chapter II,
Theorem 6]{K2}) and also the author's theorems on dimension
polynomials of difference and inversive difference field extensions
(see \cite[Theorems 4.2.1 and 4.2.5]{Levin3}).

\begin{theorem} With the above notation, let $L = K\langle \eta_{1},\dots,\eta_{s}\rangle$
be a $\Delta$-$\sigma$-field extension of a $\Delta$-$\sigma$-field
$K$ generated by a finite set $\eta =\{\eta_{1},\dots,\eta_{s}\}$.
Then there exists a polynomial $\chi_{\eta|K}(t)\in\mathbb{Q}[t]$
such that

\noindent{\em (i)}\, $\chi_{\eta|K}(r) =
\trdeg_{K}K(\{\lambda\eta_{j}\,|\,\lambda\in \Lambda(r), 1\leq j\leq
s\})$ for all sufficiently large $r\in\mathbb{Z}$ (that is, there
exists $r_{0}\in\mathbb{Z}$ such that the equality holds for all
$r>r_{0}$).

\noindent{\em (ii)}\, $\deg \chi_{\eta|K} \leq m+n$ and
$\chi_{\eta|K}(t)$ can be written as \, $\chi_{\eta|K}(t) =
\D\sum_{i=0}^{m+n}a_{i}{t+i\choose i}$, where $a_{i}\in\mathbb{Z}$.

\noindent{\em (iii)}\, $d =  \deg \chi_{\eta|K}$,\, $a_{m+n}$ and $a_{d}$ do
not depend on the set of $\Delta$-$\sigma$-generators $\eta$ of
$L/K$ ($a_{m+n}=0$ if $d < m+n$). Moreover,
$a_{m+n} = \Delta$-$\sigma$-$\trdeg_{K}L$.
\end{theorem}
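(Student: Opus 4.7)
The plan is to adapt Kolchin's proof of the differential dimension polynomial theorem to the mixed difference-differential setting, reducing the assertion to a Hilbert-type theorem for a filtered module over an operator ring. Writing $V_{r}=K(\{\lambda\eta_{j}\mid\lambda\in\Lambda(r),\,1\le j\le s\})$, the function $f(r):=\trdeg_{K}V_{r}$ is the object of study.

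First, I would linearize via K\"ahler differentials: since $\Char K=0$, $f(r)=\dim_{V_{r}}\Omega_{V_{r}/K}$. To obtain a module on which the whole basic set acts as automorphisms (needed for the Ore-ring formalism), I would pass to the inversive closure $L^{\ast}$ of $L$, which exists by the preceding proposition, and extend $\Delta\cup\sigma$ to it. Let $E$ be the operator ring generated over $L^{\ast}$ by $\Delta\cup\sigma^{\ast}$, and let $M\subseteq\Omega_{L^{\ast}/K}$ be the $E$-submodule generated by $d\eta_{1},\dots,d\eta_{s}$ with filtration $M_{r}=\sum_{\mu\in\Lambda^{\ast}(r)}\sum_{j}L^{\ast}\mu(d\eta_{j})$. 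A standard computation, using that every element of $L^{\ast}$ has the form $\tau^{-1}(a)$ with $a\in L$ and $\tau\in T$, should show that $f(r)$ and $\dim_{L^{\ast}}M_{r}$ coincide up to a bounded shift in $r$ that does not affect the conclusions.

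The core, and most technical, step is the Hilbert-type theorem for $\dim_{L^{\ast}}M_{r}$. Writing $M=E^{s}/N$ and choosing a Gr\"obner-type basis of $N$ with respect to an order on $\Lambda^{\ast}\times\{1,\dots,s\}$ compatible with $\ord$, one reduces the dimension to the cardinality of the set of standard pairs $(\mu,j)$ with $\ord\mu\le r$. A combinatorial lemma on numerical polynomials for subsets of $\mathbb{N}^{m}\times\mathbb{Z}^{n}$ under the mixed order $\sum k_{i}+\sum|l_{j}|$ then shows that this count agrees, for $r\gg 0$, with a polynomial of degree $\le m+n$ of the binomial form in (ii); integrality of the $a_{i}$ follows because $f$ takes integer values.

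For (iii), comparing the filtrations from $\eta$ and from another generating set $\eta'$ gives containments that sandwich $f'(r-r_{0})\le f(r)\le f'(r+r_{0})$, and shifting the argument of a numerical polynomial preserves degree, leading coefficient, and the top coefficient $a_{m+n}$. The identity $a_{m+n}=\Delta$-$\sigma$-$\trdeg_{K}L$ follows by choosing $\eta$ to contain a $\Delta$-$\sigma$-transcendence basis, since its purely transcendental part contributes $(\Delta$-$\sigma$-$\trdeg_{K}L)\binom{r+m+n}{m+n}$ asymptotically while $\Delta$-$\sigma$-algebraic generators contribute strictly lower-order terms. The main obstacle is the combinatorial lemma of the third step: proving the sharp degree bound $m+n$ requires carefully exploiting the $\mathbb{Z}^{n}$-symmetry of the translation part rather than naively splitting positive and negative exponents, and reconciling the Gr\"obner reduction over the non-noetherian operator ring $E$ with the finitely generated character of $M$.
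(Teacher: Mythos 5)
Your overall strategy---linearize with K\"ahler differentials, pass to the inversive closure to get an operator--module structure, and invoke a Hilbert-type theorem for filtered modules---is the same one the paper relies on (it quotes Theorem 1 from \cite{LM} and re-derives it as the special case $F=L$ of Theorem 2). But your reduction step is wrong. You filter the module by $\Lambda^{\ast}(r)$, so that $\dim_{L^{\ast}}M_{r}=\trdeg_{K}K(\{\mu\eta_{j}\mid\mu\in\Lambda^{\ast}(r),\,1\le j\le s\})$, the \emph{inversive} dimension function, and you claim this agrees with $f(r)=\trdeg_{K}K(\{\lambda\eta_{j}\mid\lambda\in\Lambda(r),\,1\le j\le s\})$ up to a bounded shift of $r$. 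It does not: already in the ordinary difference case ($m=0$, $n=1$, $s=1$, $K$ inversive, $\eta$ $\sigma$-transcendental over $K$) one has $f(r)=r+1$ while $\dim_{L^{\ast}}M_{r}=2r+1$, and no shift $r\mapsto r+c$ with bounded $c$ reconciles these. In general the two functions differ in leading behavior by a factor of up to $2^{n}$, so your argument computes the dimension polynomial of $L^{\ast}/K$ rather than $\chi_{\eta|K}$; both (i) and the identification $a_{m+n}=\Delta$-$\sigma$-$\trdeg_{K}L$ would come out wrong. The pointwise fact that every element of $L^{\ast}$ is $\tau^{-1}(a)$ with $a\in L$, $\tau\in T$ cannot rescue this, because the required $\tau$ grows with $r$.

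The repair is exactly what the paper does in its proof of Theorem 2: keep the \emph{non-inversive} filtration. Use the inversive closure only to put the $\Delta$-$\sigma^{\ast}$-structure on $\Omega_{L^{\ast}|K}$, then work with the ring $\mathcal{D}$ of $\Delta$-$\sigma$-operators over $L$ (nonnegative powers of the $\alpha_{i}$ only), the $\mathcal{D}$-submodule generated by $d\eta_{1},\dots,d\eta_{s}$, and the filtration $N_{r}=\sum_{j=1}^{s}\sum_{\lambda\in\Lambda(r)}L\,\lambda d\eta_{j}$; the differential criterion for algebraic independence gives $\dim_{L}N_{r}=f(r)$ exactly (no shift), and Theorem 3 (i.e.\ \cite[Theorem 6.7.3]{KLMP}) applied to this natural filtration yields (i) and (ii). With this filtration the exponent combinatorics lives in $\mathbb{N}^{m+n}$, so the ``$\mathbb{Z}^{n}$-symmetry'' difficulty you single out as the main obstacle is an artifact of the wrong filtration and disappears. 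Your generator-comparison argument for (iii) is fine, though the claim that $\Delta$-$\sigma$-algebraic generators contribute only lower-order terms is itself a statement that needs proof. Finally, note a hypothesis mismatch: defining the action of $\alpha^{-1}$ on $\Omega_{L^{\ast}|K}$ by $d\zeta\mapsto d\alpha^{-1}(\zeta)$ requires $d\alpha^{-1}(c)=0$ for $c\in K$, i.e.\ essentially that $K$ be inversive, which Theorem 2 assumes but Theorem 1 does not; the paper covers the general case only by citing \cite{LM}, so your route as written does not prove Theorem 1 in the stated generality.
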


\noindent The polynomial $\chi_{\eta|K}(t)$ is called the {\em
$\Delta$-$\sigma$-dimension polynomial} of the
$\Delta$-$\sigma$-field extension $L/K$ associated with the system
of $\Delta$-$\sigma$-generators $\eta$ . We see that
$\chi_{\eta|K}(t)$ is a polynomial with rational coefficients that
takes integer values for all sufficiently large values of the
argument. Such polynomials are called {\em numerical}; their
properties are thoroughly described in \cite[Chapter 2]{KLMP}. The
invariants $d =  \deg \chi_{\eta|K}$ and $a_{d}$ (if $d < m+n$) are
called {\em $\Delta$-$\sigma$-type} and {\em typical
$\Delta$-$\sigma$-transcendence degree} of $L/K$; they are denoted
by $\Delta$-$\sigma$-$\type_{K}L$ and
$\Delta$-$\sigma$-$t.\trdeg_{K}L$, respectively.

\section{Dimension polynomials of intermediate difference-differential fields. The main theorem}

The following result is an essential generalization of Theorem 1.
This generalization allows one to assign certain numerical
polynomial to an intermediate $\Delta$-$\sigma$-field of a
$\Delta$-$\sigma$-field extension $L/K$ where $K$ is an inversive
$\Delta$-$\sigma$-field. (We use the notation introduced in the
previous section.)

\begin{theorem} Let $K$ be an inversive $\Delta$-$\sigma$-field with
basic set $\Delta\bigcup\sigma$ where $\Delta = \{\delta_{1},\dots,
\delta_{m}\}$ and $\sigma = \{\alpha_{1},\dots, \alpha_{n}\}$ are
the set of derivations and automorphisms of $K$, respectively. Let
$L = K\langle \eta_{1},\dots,\eta_{s}\rangle$ be a
$\Delta$-$\sigma$-field extension of $K$ generated by a finite set
$\eta =\{\eta_{1},\dots,\eta_{s}\}$. Let $F$ be an intermediate
$\Delta$-$\sigma$-field of the extension $L/K$ and for any
$r\in\mathbb{N}$, let  $F_{r} = F\bigcap
K(\{\lambda\eta_{j}\,|\,\lambda\in\Lambda(r), 1\leq j\leq s\})$.
Then there exists a numerical polynomial $\chi_{K, F,
\eta}(t)\in\mathbb{Q}[t]$ such that

\noindent{\em (i)}\, $\chi_{K,F,\eta}(r) = \trdeg_{K}F_{r}$ for all
sufficiently large $r\in\mathbb{N}$;

\noindent{\em (ii)}\, $\deg \chi_{K,F,\eta} \leq m+n$ and
$\chi_{K,F,\eta}(t)$ can be written as $\chi_{K,F,\eta}(t) =
\D\sum_{i=0}^{m+n}c_{i}{t+i\choose i}$\,\, where
$c_{i}\in\mathbb{Z}$ ($1\leq i\leq m+n$).

\noindent{\em (iii)}\, $d =  \deg \chi_{K,F,\eta}(t)$, \,$c_{m+n}$ and\,
$c_{d}$ do not depend on the set of $\Delta$-$\sigma$-generators
$\eta$ of the extension $L/K$. Furthermore, $c_{m+n} =
\Delta$-$\sigma$-$\trdeg_{K}F$.
\end{theorem}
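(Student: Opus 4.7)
The plan is to lift the problem about intermediate $\Delta$-$\sigma$-fields to one about filtered $L^{\ast}$-subspaces of the K\"ahler differentials of the inversive closure, where the Hilbert-type polynomial machinery for filtered $\Delta$-$\sigma^{\ast}$-modules from \cite{LM} and \cite[Chapters~6--7]{KLMP} applies. Take the inversive closure $L^{\ast}$ of $L$ and equip $\Omega = \Omega_{L^{\ast}|K}$ with its natural $\Delta$-$\sigma^{\ast}$-module structure over $L^{\ast}$: $\delta_{i}$ acts via the Leibniz rule and $\alpha_{j}^{\pm 1}(a\,db) = \alpha_{j}^{\pm 1}(a)\,d\alpha_{j}^{\pm 1}(b)$, so that $\tau(dx) = d(\tau x)$ for every $\tau\in\Lambda^{\ast}$ and every $x\in L^{\ast}$. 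The elements $d\eta_{1},\dots,d\eta_{s}$ generate $\Omega$ as a $\Delta$-$\sigma^{\ast}$-module. For $r\in\mathbb{N}$ set
\[
V_{r} \;=\; \sum_{j=1}^{s}\sum_{\lambda\in\Lambda(r)} L^{\ast}\cdot d(\lambda\eta_{j}).
\]
Because characteristic zero makes $L_{r}/K$ separable, the canonical map $L^{\ast}\otimes_{L_{r}}\Omega_{L_{r}|K}\to\Omega$ is injective with image $V_{r}$, so $\dim_{L^{\ast}}V_{r} = \trdeg_{K} L_{r}$, and the Hilbert polynomial of $(V_{r})$ recovers $\chi_{\eta|K}(t)$ from Theorem~1.

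Next I would introduce the $L^{\ast}$-subspace $M = \sum_{x\in F} L^{\ast}\cdot dx$ of $\Omega$. Since $F$ is $\Delta$-$\sigma$-stable and $\tau(dx) = d(\tau x)$ for $\tau\in\Lambda$, $M$ is a $\Delta$-$\sigma$-submodule of $\Omega$, but generally not $\sigma^{\ast}$-stable since $F$ need not be inversive. The central identification to establish is
\[
\dim_{L^{\ast}}(M\cap V_{r}) \;=\; \trdeg_{K} F_{r}\quad\text{for all sufficiently large } r.
\]
The inclusion $L^{\ast}\cdot dF_{r}\subseteq M\cap V_{r}$ gives the lower bound immediately. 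For the upper bound I would pass to $M' = L^{\ast}\cdot dF^{\ast}$, the $\Delta$-$\sigma^{\ast}$-submodule obtained from the inversive closure $F^{\ast}$ of $F$ in $L^{\ast}$: the second-isomorphism identity yields $\dim(M'\cap V_{r}) = \trdeg_{K} L_{r} - \trdeg_{F^{\ast}}(F^{\ast} L_{r})$, and for $r$ large enough a separability/tower argument reduces $\trdeg_{F^{\ast}}(F^{\ast} L_{r})$ to $\trdeg_{F_{r}} L_{r}$, which combined with $\dim(M\cap V_{r})\leq\dim(M'\cap V_{r})$ and the lower bound forces equality.

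With this identification in place, the existence and shape of $\chi_{K,F,\eta}(t)$ follow from the Hilbert-type polynomial theorem for filtered $\Delta$-$\sigma^{\ast}$-modules: $(V_{r})$ is the standard $\Lambda$-filtration of the finitely generated $\Delta$-$\sigma^{\ast}$-module $\Omega$ attached to $\{d\eta_{j}\}$, so for any $\Delta$-$\sigma$-submodule of $\Omega$ the induced filtration has dimension equal, for large $r$, to a numerical polynomial of degree at most $m+n$ with the binomial expansion in (ii); this gives (i) and (ii). The invariance assertions in (iii) follow from the standard invariance of the degree, the highest binomial coefficient, and the leading coefficient in the binomial basis under equivalent filtrations of $M$, together with the direct identification $c_{m+n} = \Delta$-$\sigma$-$\trdeg_{K} F$, which manifestly depends only on the extension $F/K$.

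The main obstacle is the intersection identification in the second paragraph: because $F$ is only $\Delta$-$\sigma$-stable, the dictionary between intermediate subfields and $L^{\ast}$-subspaces of $\Omega$ requires careful bookkeeping through the inversive closure $F^{\ast}$, and one must verify that $\trdeg_{F^{\ast}}(F^{\ast} L_{r}) = \trdeg_{F_{r}}L_{r}$ stabilizes for large $r$. This is precisely where passage to $L^{\ast}$ earns its keep: the non-invertibility of the translations on $F$ prevents any straightforward translation to $\Omega_{L|K}$. Once this technical step is secured, the rest of the argument is a direct application of the module-theoretic Hilbert polynomial machinery already in the literature.
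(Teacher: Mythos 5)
Your overall strategy (pass to the inversive closure $L^{\ast}$, work in $\Omega_{L^{\ast}|K}$ with its $\Delta$-$\sigma^{\ast}$-structure, and invoke the dimension-polynomial machinery for filtered difference-differential modules) is the same as the paper's, but you diverge at the decisive step, and that is where your argument has a genuine gap. You filter the subspace $M=L^{\ast}\cdot dF$ by intersecting with the ambient filtration, $M\cap V_{r}$, and must then prove $\dim_{L^{\ast}}(M\cap V_{r})=\trdeg_{K}F_{r}$. Your upper bound reduces this to the claim that $\trdeg_{F^{\ast}}(F^{\ast}L_{r})=\trdeg_{F_{r}}L_{r}$ for large $r$, dismissed as ``a separability/tower argument.'' Only the inequality $\trdeg_{F^{\ast}}(F^{\ast}L_{r})\leq\trdeg_{F_{r}}L_{r}$ is automatic, and it points the wrong way: with it, your chain reads $\trdeg_{K}F_{r}\leq\dim(M\cap V_{r})\leq\trdeg_{K}L_{r}-\trdeg_{F^{\ast}}(F^{\ast}L_{r})\geq\trdeg_{K}F_{r}$, which forces nothing. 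The reverse inequality is exactly the delicate point: an algebraic relation over $F$ (or $F^{\ast}$) among elements of $L_{r}$ may involve elements of $F$ that lie outside $L_{r}$, so elements of $L_{r}$ independent over $F_{r}=F\cap L_{r}$ need not stay independent over $F^{\ast}$; for arbitrary intermediate fields of field extensions the equality is simply false (take $K=\mathbb{Q}$, $L_{r}=\mathbb{Q}(a,b)$, $F=\mathbb{Q}(a+bc,c)$ inside $\mathbb{Q}(a,b,c)$), and nothing in your sketch uses the $\Delta$-$\sigma$-stability of $F$ to rescue it. In effect this unproved lemma is of the same depth as the theorem itself, so the proposal does not close.

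The paper avoids this issue entirely by not using the intersection filtration: it takes $N$ to be the module over the ring ${\cal{D}}$ of $\Delta$-$\sigma$-operators over $L$ generated by $\{d\zeta\,|\,\zeta\in F\}$, filtered by $N_{r}=$ the $L$-span of $\{d\zeta\,|\,\zeta\in F_{r}\}$. With this choice the identification $\dim_{L}N_{r}=\trdeg_{K}F_{r}$ is the easy part (the K\"ahler-differential criterion, plus the short minimal-degree argument showing that $L$-linear independence of the $dx_{i}$ already forces algebraic independence over $K$), and the real work is transferred to showing that $(N_{r})$ is an excellent filtration, which follows from Theorem 4 applied to the filtered embedding $N\rightarrow\sum_{i}{\cal{D}}d\eta_{i}$; Theorem 3 then yields the polynomial and the invariants in (iii). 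If you want to salvage your route, you would either have to prove the transcendence-degree equality above for $\Delta$-$\sigma$-stable $F$ and large $r$ (not attempted), or replace $M\cap V_{r}$ by the directly defined spans of $dF_{r}$ as in the paper.
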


The polynomial $\chi_{K,F,\eta}(t)$ is called a {\em
$\Delta$-$\sigma$-dimension polynomial of the intermediate field $F$
associated with the set of $\Delta$-$\sigma$-generators $\eta$ of
$L/K$}.

The proof of Theorem 2 is based on properties of difference-differential
modules and the difference-differential structure on the module of K\"ahler differentials
considered below. Similar properties in differential and difference cases
can be found in \cite{Johnson1} and \cite[Section 4.2]{Levin3}, respectively.

Let $K$ be a $\Delta$-$\sigma$-field and $\Lambda$ the semigroup of
power products of basic operators introduced in section 2. Let
$\mathcal{D}$ denote the set of all finite sums of the form
$\sum_{\lambda\in\Lambda}a_{\lambda}\lambda$ where $a_{\lambda}\in
K$ (such a sum is called a $\Delta$-$\sigma$-{\em operator} over
$K$; two $\Delta$-$\sigma$-operators are equal if and only if their
corresponding coefficients are equal). The set ${\cal{D}}$ can be
treated as a ring with respect to its natural structure of a left
$K$-module and the relationships $\delta a = a\delta + \delta(a)$,
$\alpha a = \alpha(a)\alpha$ for any $a\in K$, $\delta\in\Delta$,
$\alpha\in\sigma$ extended by distributivity. The ring ${\cal{D}}$
is said to be the {\em ring of $\Delta$-$\sigma$-operators} over
$K$.

If $A = \sum_{\lambda\in\Lambda}a_{\lambda}\lambda\in{\cal{D}}$,
then the number $\ord A = \max\{\ord \lambda\,|\,a_{\lambda}\neq
0\}$ is called the {\em order} of the $\Delta$-$\sigma$-operator
$A$. In what follows, we treat ${\cal{D}}$ as a filtered ring with
the ascending filtration $({\cal{D}}_{r})_{r\in\mathbb{Z}}$ where
${\cal{D}}_{r} = 0$ if $r < 0$ and ${\cal{D}}_{r} = \{A\in
{\cal{D}}\,|\,\ord A\leq r\}$ if $r\geq 0$.

Similarly, if a $\Delta$-$\sigma$-field $K$ is inversive and
$\Lambda^{\ast}$ is the semigroup defined in section 2, then
$\mathcal{E}$ will denote the set of all finite sums
$\sum_{\mu\in\Lambda^{\ast}}a_{\mu}\mu$ where $a_{\mu}\in K$. Such a
sum is called a $\Delta$-$\sigma^{\ast}$-operator over $K$; two
$\Delta$-$\sigma^{\ast}$-operators are equal if and only if their
corresponding coefficients are equal. Clearly, the ring
$\mathcal{D}$ of $\Delta$-$\sigma$-operators over $K$ is a subset of
${\cal{E}}$. Moreover, ${\cal{E}}$ can be treated as an overring of
$\mathcal{D}$ such that $\alpha^{-1}a = \alpha^{-1}(a)\alpha^{-1}$
for every $\alpha\in\sigma$, $a\in K$. This ring is called the {\em
ring of $\Delta$-$\sigma^{\ast}$-operators} over $K$.

The order of a $\Delta$-$\sigma^{\ast}$-operator $B =
\sum_{\mu\in\Lambda^{\ast}}a_{\mu}\mu$ is defined in the same way as
the order of a $\Delta$-$\sigma$-operator: $\ord B = \max\{\ord
\mu\,|\,a_{\mu}\neq 0\}$. In what follows the ring ${\cal{E}}$ is
treated as a filtered ring with the ascending filtration
$({\cal{E}}_{r})_{r\in\mathbb{Z}}$ such that ${\cal{E}}_{r} = 0$ if
$r < 0$ and ${\cal{E}}_{r} = \{B\in {\cal{E}}\,|\,\ord B\leq r\}$ if
$r\geq 0$.

If $K$ is a $\Delta$-$\sigma$-field, then a {\em
difference-differential module} over $K$ (also called a
$\Delta$-$\sigma$-$K$-module) is a left ${\cal{D}}$-module $M$, that
is, a vector $K$-space where elements of $\Delta\bigcup\sigma$ act
as additive mutually commuting operators such that $\delta(ax) =
a(\delta x) + \delta(a)x$ and $\alpha(ax) = \alpha(a)\alpha x$ for
any $\delta\in\Delta$, $\alpha\in \sigma$, $x\in M$, $a\in K$. We
say that $M$ is a finitely generated $\Delta$-$\sigma$-$K$-module if
$M$ is finitely generated as a left ${\cal{D}}$-module.

Similarly, if $K$ is a $\Delta$-$\sigma^{\ast}$-field, then an {\em
inversive difference-differential module} over $K$ (also called a
$\Delta$-$\sigma^{\ast}$-$K$-module) is a left ${\cal{E}}$-module
(that is, a $\Delta$-$\sigma$-$K$-module $M$ with the action of
elements of $\sigma^{\ast}$ such that $\alpha^{-1}(ax) =
\alpha^{-1}(a)\alpha^{-1}x$ for every $\alpha\in\sigma$). A
$\Delta$-$\sigma^{\ast}$-$K$-module $M$ is said to be finitely
generated if it is generated as a left ${\cal{E}}$-module by a
finite set whose elements are called
$\Delta$-$\sigma^{\ast}$-generators of $M$.

If $M$ is a $\Delta$-$\sigma$-$K$-module (respectively, a
$\Delta$-$\sigma^{\ast}$-module, if $K$ is a
$\Delta$-$\sigma^{\ast}$-field), then by a filtration of $M$ we mean
an exhaustive and separated filtration of $M$ as a ${\cal{D}}$-
(respectively, ${\cal{E}}$-) module, that is, an ascending chain
$(M_{r})_{r\in\mathbb{Z}}$ of vector $K$-subspaces of $M$ such that
${\cal{D}}_{r}M_{s}\subseteq M_{r+s}$ (respectively,
${\cal{E}}_{r}M_{s}\subseteq M_{r+s}$) for all $r, s\in \mathbb{Z}$,
$M_{r} = 0$ for all sufficiently small $r\in \mathbb{Z}$, and
$\bigcup_{r\in \mathbb{Z}}M_{r} = M$. A filtration
$(M_{r})_{r\in\mathbb{Z}}$ of a $\Delta$-$\sigma$-$K$-
(respectively, $\Delta$-$\sigma^{\ast}$-$K$) module $M$ is said to
be {\em excellent} if every $M_{r}$ is a finite dimensional vector
$K$-space and there exists $r_{0}\in\mathbb{Z}$ such that $M_{r} =
{\cal{D}}_{r-r_{0}}M_{r_{0}}$ (respectively, $M_{r} =
{\cal{E}}_{r-r_{0}}M_{r_{0}}$) for any $r \geq r_{0}$. Clearly, if
$M$ is generated as a ${\cal{D}}$- (respectively, ${\cal{E}}$-)
module by elements $x_{1},\dots x_{s}$, then
$\left(\sum_{i=1}^{s}{\cal{D}}_{r}x_{i}\right)_{r\in\mathbb{Z}}$
(respectively,
$\left(\sum_{i=1}^{s}{\cal{E}}_{r}x_{i}\right)_{r\in\mathbb{Z}}$) is
an excellent filtration of $M$; it is said to be the natural
filtration associated with the set of generators $\{x_{1},\dots,
x_{s}\}$.

If $M'$ and $M''$ are  $\Delta$-$\sigma$-$K$- (respectively,
$\Delta$-$\sigma^{\ast}$-$K$-) modules, then a mapping
$f:M'\rightarrow M''$ is said to be a $\Delta$-$\sigma$-homomorphism
if it is a homomorphism of ${\cal{D}}$- (respectively, ${\cal{E}}$-)
modules. If $M'$ and $M''$ are equipped with filtrations
$(M'_{r})_{r\in\mathbb{Z}}$ and $(M''_{r})_{r\in\mathbb{Z}}$,
respectively, and $f(M'_{r})\subseteq M''_{r}$ for every
$r\in\mathbb{Z}$, then $f$ is said to be a
$\Delta$-$\sigma$-homomorphism of filtered $\Delta$-$\sigma$-$K$-
(respectively, $\Delta$-$\sigma^{\ast}$-$K$-) modules.

The following two statements are direct consequences of
\cite[Theorem 6.7.3]{KLMP} and \cite[Theorem 6.7.10]{KLMP},
respectively.

\begin{theorem}
With the above notation, let $K$ be a $\Delta$-$\sigma$-field, $M$ a
finitely generated $\Delta$-$\sigma$-$K$-module, and
$(M_{r})_{r\in\mathbb{Z}}$ the natural filtration associated with
some finite system of generators of $M$ over the ring of
$\Delta$-$\sigma$-operators ${\cal{D}}$. Then there is a numerical
polynomial $\phi(t)\in\mathbb{Q}[t]$ such that:

\medskip

\noindent{\em(i)}\, $\phi(r) = \dim_{K}M_{r}$ for all sufficiently
large $r\in\mathbb{Z}$.

\smallskip

\noindent{\em(ii)}\, $\deg \phi \leq m+n$ and $\phi(t)$ can be
written as $\phi(t) = \D\sum_{i=0}^{m+n}a_{i}{t+i\choose i}$ where
$a_{0},\dots, a_{m+n}\in\mathbb{Z}$.

\smallskip

\noindent{\em(iii)}\, $d =  \deg \phi(t)$,\, $a_{n}$ and $a_{d}$ do
not depend on the finite set of generators of the ${\cal{D}}$-module
$M$ the filtration $(M_{r})_{r\in\mathbb{Z}}$ is associated with.
Furthermore, $a_{m+n}$ is equal to the $\Delta$-$\sigma$-dimension
of $M$ over $K$ (denoted by $\Delta$-$\sigma$-$\dim_{K}M$), that is,
to the maximal number of elements $x_{1},\dots,x_{k}\in M$ such that
the family $\{\lambda x_{i}\,|\,\lambda\in\Lambda, 1\leq i\leq k\}$
is linearly independent over $K$.
\end{theorem}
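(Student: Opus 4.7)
The plan is to reduce the claim to a Hilbert-polynomial statement on the associated graded module, which is the content of \cite[Theorem 6.7.3]{KLMP} cited here. First, I would examine the filtered structure of the ring $\mathcal{D}$ itself. Each $\mathcal{D}_{r}$ is a free left $K$-module with basis $\Lambda(r)$, and $|\Lambda(r)|=\binom{r+m+n}{m+n}$ is already a numerical polynomial in $r$ of degree $m+n$ of the form asserted in (ii). The associated graded ring $\operatorname{gr}\mathcal{D}=\bigoplus_{r\geq 0}\mathcal{D}_{r}/\mathcal{D}_{r-1}$ is generated in degree one by the symbols of $\delta_{1},\dots,\delta_{m},\alpha_{1},\dots,\alpha_{n}$: the commutators $[\delta_{i},a]=\delta_{i}(a)\in\mathcal{D}_{0}$ and $\alpha_{j}a-\alpha_{j}(a)\alpha_{j}=0$ show that $\operatorname{gr}\mathcal{D}$ is a twisted polynomial algebra over $K$ in $m+n$ commuting variables, where the twist by the action of $\sigma$ on $K$ does not affect left $K$-vector space dimensions of graded pieces.

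Second, since $(M_{r})_{r\in\mathbb{Z}}$ is excellent, the associated graded module $\operatorname{gr}M=\bigoplus_{r}M_{r}/M_{r-1}$ is a finitely generated graded $\operatorname{gr}\mathcal{D}$-module with finite-dimensional $K$-pieces. I would then invoke the Hilbert-Serre-type theorem for finitely generated graded modules over such skew polynomial algebras over a field (developed in \cite[Chapter 6]{KLMP}) to obtain a numerical polynomial $\psi(t)$ of degree $\leq m+n-1$ with $\psi(r)=\dim_{K}(M_{r}/M_{r-1})$ for $r\gg 0$. Summing telescopically, $\dim_{K}M_{r}$ agrees with a numerical polynomial $\phi(t)$ of degree $\leq m+n$ for large $r$, and the standard change-of-basis for numerical polynomials writes it as $\phi(t)=\sum_{i=0}^{m+n}a_{i}\binom{t+i}{i}$ with $a_{i}\in\mathbb{Z}$, giving (i) and (ii).

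Third, for the invariance statements in (iii), I would compare the natural filtrations $(M_{r})$ and $(M'_{r})$ arising from two finite systems of $\mathcal{D}$-generators of $M$. Excellence yields a constant $c$ with $M_{r}\subseteq M'_{r+c}$ and $M'_{r}\subseteq M_{r+c}$ for all $r$, forcing the two polynomials to have the same degree $d$, the same leading coefficient, and the same coefficient $a_{m+n}$ when expressed in the binomial basis. To identify $a_{m+n}$ with $\Delta$-$\sigma$-$\dim_{K}M$, I would use additivity of $\phi$ under short exact sequences of excellently filtered modules to reduce to the case of a free $\mathcal{D}$-module of rank $k$, for which direct computation gives $\phi(r)=k\binom{r+m+n}{m+n}$ and hence $a_{m+n}=k$; the general case follows by choosing a maximal $\Delta$-$\sigma$-independent subset of $M$ and observing that the torsion quotient contributes a polynomial of degree $<m+n$.

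The main obstacle is the Hilbert-Serre step in the presence of the twist of $\operatorname{gr}\mathcal{D}$: one must verify that the classical commutative-algebra argument producing a polynomial Hilbert function goes through when the polynomial structure is twisted by an action of $\sigma$ on $K$. Since the twist only permutes $K$-bases of graded pieces and preserves their $K$-dimensions, the standard filtered-ring argument (choosing a finite-dimensional generating set of $\operatorname{gr}M$, filtering by generator count, and applying induction on $m+n$ via the exact sequence coming from multiplication by one symbol) adapts verbatim. Once this is in hand, (i)--(iii) follow from routine manipulations with numerical polynomials and excellent filtrations.
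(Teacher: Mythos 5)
First, note that the paper offers no proof of this statement at all: Theorem~3 is presented as ``a direct consequence of \cite[Theorem 6.7.3]{KLMP}.'' So you are not deviating from an argument in the paper; you are supplying one. The route you chose, however, has a genuine gap at its central step. You pass to $\operatorname{gr}\mathcal{D}$, observe correctly that it is a twisted polynomial algebra over $K$ with relations $\bar{\alpha}_{j}a=\alpha_{j}(a)\bar{\alpha}_{j}$, and then assert that the Hilbert--Serre induction ``adapts verbatim'' because the twist ``only permutes $K$-bases of graded pieces and preserves their $K$-dimensions.'' That is precisely where the argument breaks. Left multiplication by the symbol $\bar{\alpha}_{j}$ on a graded $\operatorname{gr}\mathcal{D}$-module is only $\alpha_{j}$-semilinear, and in the theorem as stated $K$ is a general $\Delta$-$\sigma$-field, so $\alpha_{j}$ is merely an injective endomorphism and $\alpha_{j}(K)$ may be a proper subfield of $K$. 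Consequently $\bar{\alpha}_{j}N_{r}$ is only an $\alpha_{j}(K)$-subspace, images of $K$-linearly independent families need not stay $K$-linearly independent (a relation $\sum a_{i}\bar{\alpha}_{j}v_{i}=0$ with $a_{i}\notin\alpha_{j}(K)$ cannot be pulled back), and the four-term sequence $0\to A_{r}\to N_{r}\to N_{r+1}\to B_{r+1}\to 0$ on which the induction rests is not an exact sequence of $K$-spaces and $K$-linear maps. The alternating-sum dimension identity therefore fails, and the induction on the number of variables does not go through. (In the inversive case, where each $\alpha_{j}$ is an automorphism, semilinear rank--nullity does hold and your argument can be repaired; but Theorem~3 is applied in the proof of Theorem~2 to a $\mathcal{D}$-module over the non-inversive field $L$, so the non-inversive case is the one that matters. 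There is also a secondary issue you would need to address in the non-inversive case: left Noetherianity of the iterated skew extension $\operatorname{gr}\mathcal{D}$, which is needed to know that $\ker(\bar{\alpha}_{j}\cdot)$ and $N/\bar{\alpha}_{j}N$ are finitely generated over the smaller algebra.)

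This obstruction is exactly why the proofs in the literature (\cite[Ch.~6]{KLMP}, and the difference-module analogues in \cite{Levin3}) do not use an associated-graded Hilbert--Serre argument. They instead work with a Gr\"obner-basis/characteristic-set reduction in a free $\mathcal{D}$-module: one orders $\Lambda\times\{e_{1},\dots,e_{k}\}$, shows that $M_{r}$ has a $K$-basis indexed by the elements of $\Lambda(r)$ lying outside finitely many ``shadows'' of leading terms, and then invokes the combinatorial theorem that the counting function of the complement of a finite union of cones in $\mathbb{N}^{m+n}$ is eventually a numerical polynomial of degree $\leq m+n$. That approach only uses injectivity of the translations and sidesteps semilinearity entirely. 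Your second and third paragraphs (telescoping, the binomial-basis rewriting, the shift argument $M_{r}\subseteq M'_{r+c}$ giving invariance of $d$, $a_{d}$ and $a_{m+n}$) are fine and standard, but they all sit downstream of the unproved polynomiality of $r\mapsto\dim_{K}(M_{r}/M_{r-1})$. To make your proposal work you would either have to restrict to inversive $\sigma$ or replace the Hilbert--Serre step by the reduction-plus-counting argument.
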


\begin{theorem}
Let $f: M'\rightarrow M''$ be an injective homomorphism of filtered
$\Delta$-$\sigma$-$K$-modules $M'$ and $M''$ with filtrations
$(M'_{r})_{r\in\mathbb{Z}}$ and $(M''_{r})_{r\in\mathbb{Z}}$,
respectively.  If the filtration of $M''$ is excellent, then the
filtration of $M'$ is excellent as well.
\end{theorem}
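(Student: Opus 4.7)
The plan is to translate excellence of a filtration into finite generation over a suitable Rees ring of the operator ring $\mathcal{D}$, and then to invoke Noetherianity. First, since $f$ is injective and filtration-preserving, I replace $M'$ by its image $f(M')\subseteq M''$; the injectivity makes this an isomorphism of filtered $\Delta$-$\sigma$-$K$-modules, and the compatibility $f(M'_{r})\subseteq M''_{r}$ lets me assume that $M'$ is a $\Delta$-$\sigma$-submodule of $M''$ with $M'_{r}\subseteq M''_{r}$ for every $r\in\mathbb{Z}$, the filtration $(M'_{r})$ still being compatible with the filtration of $\mathcal{D}$.

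Next, I form the Rees ring $\widetilde{\mathcal{D}}=\bigoplus_{r\geq 0}\mathcal{D}_{r}$ and, for any filtered $\mathcal{D}$-module $M$, the Rees module $\widetilde{M}=\bigoplus_{r\in\mathbb{Z}}M_{r}$, both graded by $r$. A standard graded-module argument gives the key equivalence: the filtration $(M_{r})$ is \emph{excellent} if and only if each $M_{r}$ is finite-dimensional over $K$ and $\widetilde{M}$ is a finitely generated graded left $\widetilde{\mathcal{D}}$-module. In one direction one picks finitely many homogeneous generators of $\widetilde{M}$ of maximal degree $r_{0}$; in the other, a $K$-basis of $M_{r_{0}}$ together with bases of $M_{r}$ for $r<r_{0}$ generate $\widetilde{M}$. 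Under the above reduction, $\widetilde{M'}$ is a graded $\widetilde{\mathcal{D}}$-submodule of $\widetilde{M''}$.

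The main obstacle is the Noetherianity of $\widetilde{\mathcal{D}}$. Using the commutation relations $\delta_{i} a=a\delta_{i}+\delta_{i}(a)$ and $\alpha_{j} a=\alpha_{j}(a)\alpha_{j}$ ($a\in K$), the associated graded ring $\mathrm{gr}\,\mathcal{D}$ is an iterated skew polynomial ring over $K$ in the principal symbols of the $\delta_{i}$ and $\alpha_{j}$, hence left Noetherian by the Hilbert basis theorem for Ore extensions. The passage from Noetherianity of $\mathrm{gr}\,\mathcal{D}$ to Noetherianity of the Rees ring $\widetilde{\mathcal{D}}$ is standard, and in our non-commutative $\Delta$-$\sigma$-setting this is precisely the content of Theorem 6.7.10 of \cite{KLMP} referenced in the text.

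Given Noetherianity the conclusion is immediate: excellence of $(M''_{r})$ makes $\widetilde{M''}$ a finitely generated $\widetilde{\mathcal{D}}$-module, so its graded submodule $\widetilde{M'}$ is finitely generated as well, and each $M'_{r}$ is a subspace of the finite-dimensional $M''_{r}$. Applying the equivalence of the second paragraph to $(M'_{r})$ then yields excellence of the filtration of $M'$, as required.
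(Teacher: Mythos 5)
The paper itself gives no argument for this statement: Theorem 4 is quoted as a direct consequence of Theorem 6.7.10 of \cite{KLMP}, so your Rees-module proof is a reconstruction of the standard filtered/graded technique rather than a parallel to anything written in the text. Your skeleton is sound: replacing $M'$ by $f(M')$ so that $M'_{r}\subseteq M''_{r}$; the equivalence ``the filtration $(M_{r})$ is excellent if and only if each $M_{r}$ is finite-dimensional and $\widetilde{M}=\bigoplus_{r}M_{r}$ is a finitely generated graded $\widetilde{\mathcal{D}}$-module'' (which uses, correctly, that $M_{r}=0$ for all sufficiently small $r$); and the final deduction that a graded submodule of a finitely generated module over a left Noetherian graded ring is finitely generated. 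All of this is correct and does prove the theorem once $\widetilde{\mathcal{D}}$ is known to be left Noetherian.

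The gap is exactly at that Noetherianity step. You invoke ``the Hilbert basis theorem for Ore extensions,'' but that theorem requires the twisting maps to be automorphisms once the coefficient ring is no longer a division ring, and in this paper the elements of $\sigma$ are only injective endomorphisms (the whole point of Theorems 3 and 4 is that $K$, and in the application the field $L$, need not be inversive). For a genuine endomorphism the skew Hilbert basis theorem fails: with $R=k[y]$, $\sigma(y)=y^{2}$ and $xr=\sigma(r)x$, the left ideal of $R[x;\sigma]$ generated by $\{yx^{j}\,|\,j\geq 1\}$ is not finitely generated, since $y^{a}x^{b}\cdot yx^{j}=y^{a+2^{b}}x^{b+j}$ forces every element of the ideal generated by $yx^{j}$, $j\leq n$, to have $y$-degree at least $2$ in its $x^{n+1}$-component. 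So presenting $\mathrm{gr}\,\mathcal{D}$ (or $\widetilde{\mathcal{D}}$) as an iterated Ore extension and citing the Hilbert basis theorem does not suffice. The fact you need is nevertheless true, because here the twist acts only on the coefficient \emph{field} and fixes the symbols of the $\delta_{i},\alpha_{j}$: left multiplication of a term $a\lambda$ by a monomial $\mu$ produces $\mu(a)\,\mu\lambda$ plus lower-order terms, with $\mu(a)\neq 0$ by injectivity, so leading monomials multiply without cancellation, and Dickson's lemma together with the usual division argument shows that every left ideal of $\mathcal{D}$, of $\mathrm{gr}\,\mathcal{D}$, and of $\widetilde{\mathcal{D}}$ (whose monomials are the $\lambda u^{k}$ with $u$ the central homogenizing element) is finitely generated. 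You should either supply this argument or cite a result that explicitly covers endomorphisms. Finally, attributing the passage from $\mathrm{gr}\,\mathcal{D}$ to the Rees ring to \cite[Theorem 6.7.10]{KLMP} is both inaccurate and circular: by the paper's own account that theorem is (essentially) the statement you are proving.
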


\smallskip

\centerline{PROOF OF THEOREM 2}

\medskip

Let $L = K\langle\eta_{1},\dots, \eta_{s}\rangle$ be a
$\Delta$-$\sigma$-field extension of a
$\Delta$-$\sigma^{\ast}$-field $K$. Let $L^{\ast}$ be the inversive
closure of $L$, that is, $L^{\ast} = K\langle\eta_{1},\dots,
\eta_{s}\rangle^{\ast}$. Let $M = \Omega_{L^{\ast}|K}$, the module
of K\"ahler differentials associated with the extension
$L^{\ast}/K$. Then $\Omega_{L^{\ast}|K}$ can be treated as a
$\Delta$-$\sigma^{\ast}$-$K$-module where the action of the elements
of $\Delta\bigcup\sigma^{\ast}$ is defined in such a way that
$\delta(d\zeta) = d\delta(\zeta)$ and $\alpha(d\zeta) =
d\alpha(\zeta)$ for any $\zeta\in L$, $\delta\in\Delta$, $\alpha\in
\sigma^{\ast}$ (see \cite{Johnson1} and \cite[Lemma 4.2.8]{Levin5}).

For every $r\in\mathbb{N}$, let $M_{r}$ denote the vector
$L^{\ast}$-subspace of $M$ generated by all elements $d\zeta$ where
$\zeta\in K(\D\bigcup_{i=1}^{s}\Lambda^{\ast}(r)\eta_{i})$. It is
easy to check that $(M_{r})_{r\in\mathbb{Z}}$ ($M_{r} = 0$ if $r <
0$) is the natural filtration of the
$\Delta$-$\sigma^{\ast}$-$L^{\ast}$-module $M$ associated with the
system of $\Delta$-$\sigma^{\ast}$-generators $\{d\eta_{1},\dots,
d\eta_{s}\}$ .

Let $F$ be any intermediate $\Delta$-$\sigma$-field of $L/K$, $F_{r}
= F\bigcap K(\{\lambda\eta_{j}\,|\,\lambda\in\Lambda(r), 1\leq j\leq
s\})$ ($r\in\mathbb{N}$) and $F_{r} = 0$ if $r<0$. Let ${\cal{E}}$
and ${\cal{D}}$ denote the ring of
$\Delta$-$\sigma^{\ast}$-operators over $L^{\ast}$ and the ring of
$\Delta$-$\sigma$-operators over $L$, respectively. Let $N$ be the
${\cal{D}}$-submodule of $M$ generated by all elements of the form
$d\zeta$ with $\zeta\in F$ (by $d\zeta$ we always mean
$d_{L^{\ast}|K}\zeta$). Furthermore, for any $r\in\mathbb{N}$, let
$N_{r}$ be the vector $L$-space generated by all elements $d\zeta$
with $\zeta\in F_{r}$ and $N_{r}=0$ if $r<0$.

It is easy to see that $(N_{r})_{r\in \mathbb{Z}}$ is a filtration
of the $\Delta$-$\sigma$-$L$-module $N$, and if $M' =
\sum_{i=1}^{s}{\cal{D}}d\eta_{i}$, then the embedding $N\rightarrow
M'$ is a homomorphism of filtered ${\cal{D}}$-modules. ($M'$ is
considered as a filtered ${\cal{D}}$-module with the excellent
filtration
$\left(\sum_{i=1}^{s}{\cal{D}}_{r}d\eta_{i}\right)_{r\in\mathbb{Z}}$.)
By Theorem 4, $(N_{r})_{r\in\mathbb{Z}}$ is an excellent filtration
of the ${\cal{D}}$-module $N$. Applying Theorem 3 we obtain that
there exists a polynomial $\chi_{K, F, \eta}(t)\in\mathbb{Q}[t]$
such that $\chi_{K, F, \eta}(t)(r) = \dim_{K}N_{r}$ for all
sufficiently large $r\in \mathbb{Z}$.

As it is shown in \cite[Chapter V, Section 23]{Morandi}, elements
$\zeta_{1},\dots, \zeta_{k}\in L^{\ast}$ are algebraically
independent over $K$ if and only if the elements $d\zeta_{1},\dots,
d\zeta_{k}$ are linearly independent over $L^{\ast}$. Thus, if
$\zeta_{1},\dots, \zeta_{k}\in F_{r}$ ($r\in \mathbb{Z}$) are
algebraically independent over $K$, then the elements
$d\zeta_{1},\dots, d\zeta_{k}\in N_{r}$ are linearly independent
over $L^{\ast}$ and therefore over $L$.  Conversely, if elements
$dx_{1},\dots, dx_{h}$ ($x_{i}\in F_{r}$ for $i=1,\dots, h$) are
linearly independent over $L$, then $x_{1},\dots, x_{h}$ are
algebraically independent over $K$. Otherwise, we would have a
polynomial $f(X_{1},\dots, X_{h})\in K[X_{1},\dots, X_{h}]$ of the
smallest possible degree such that $f(x_{1},\dots, x_{h})=0$. Then
$df(x_{1},\dots, x_{h}) = \sum_{i=1}^{h}\frac{\partial f}{\partial
X_{i}}(x_{1},\dots, x_{h})dx_{i} = 0$ where not all coefficients of
$dx_{i}$ are zeros (they are expressed by polynomials of degree less
than $\deg f$). Since all the coefficients lie in $L$, we would have
a contradiction with the linear independence of $dx_{1},\dots,
dx_{h}$ over $L$.

It follows that $\dim_{L}N_{r} = \trdeg_{K}F_{r}$ for all
$r\in\mathbb{N}$. Applying Theorem 3 we obtain the statement of
Theorem 2.\, $\square$

\medskip

Clearly, if $F=L$, then Theorem 2 implies Theorem 1. Note also that
if an intermediate field $F$ of a finitely generated
$\Delta$-$\sigma$-field extension $L/K$ is not a
$\Delta$-$\sigma$-subfield of $L$, there might be no numerical
polynomial whose values for sufficiently large integers $r$ are
equal to $\trdeg_{K}(F\bigcap
K(\{\lambda\eta_{j}\,|\,\lambda\in\Lambda(r), 1\leq j\leq s\}))$.
Indeed, let $\Delta = \{\delta\}$ and $\sigma = \emptyset$. Let $L =
K\langle y\rangle$, where the $\Delta$-$\sigma$-generator $y$ is
$\Delta$-$\sigma$-independent over $K$, and let $F = K(\delta^{2}y,
\dots, \delta^{2k}y,\dots )$. Then $\Lambda =
\{\delta^{i}\,|\,i\in\mathbb{N}\}$, $\Lambda(r) = \{1, \delta,\dots,
\delta^{r}\}$, $F_{r} = F\bigcap K(\lambda
y\,|\,\lambda\in\Lambda(r))$ and $\trdeg_{K}F_{r} = [{\frac{r}{2}}]$
(the integer part of ${\frac{r}{2}}$), which is not a polynomial of
$r$. In this case, the function $\phi(r) = \trdeg_{K}F_{r}$ is a
quasi-polynomial, but if one takes $F = K(\delta^{2}y, \dots,
\delta^{2^{k}}y,\dots )$, then $\trdeg_{K}F_{r} = [\log_{2}r]$.

\section{Type and dimension of difference-differential field extensions}

Let $K$ be an inversive difference-differential ($\Delta$-$\sigma$-)
field with a basic set $\Delta\bigcup\sigma$ where $\Delta =
\{\delta_{1},\dots, \delta_{m}\}$ and $\sigma = \{\alpha_{1},\dots,
\alpha_{n}\}$ are the sets of derivations and automorphisms of $K$,
respectively. Let $L = K\langle \eta_{1},\dots,\eta_{s}\rangle$ be a
$\Delta$-$\sigma$-field extension of $K$ generated by a finite set
$\eta =\{\eta_{1},\dots,\eta_{s}\}$. (We keep the notation
introduced in section 2.)

Let $\mathfrak{U}$ denote the set of all intermediate
$\Delta$-$\sigma$-fields of the extension $L/K$ and
$$\mathfrak{B}_{\mathfrak{U}} = \{(F, E)\in \mathfrak{U}\times
\mathfrak{U}\,|\,F\supseteq E\}.$$ Furthermore, let
$\overline{\mathbb{Z}}$ denote the ordered set $\mathbb{Z}\bigcup
\{\infty\}$ (where the natural order on $\mathbb{Z}$ is extended by
the condition $a < \infty$ for any $a\in\mathbb{Z}$).

\begin{proposition} With the above notation, there exists a unique
mapping\\
$\mu_{\mathfrak{U}}: \mathfrak{B}_{\mathfrak{U}}\rightarrow
\overline{\mathbb{Z}}$ such that

{\em (i)}\, $\mu_{\mathfrak{U}}(F, E)\geq -1$ for any pair $(F,
E)\in \mathfrak{B}_{\mathfrak{U}}$\,.

{\em (ii)}\, If $d\in\mathbb{N}$, then $\mu_{\mathfrak{U}}(F, E)\geq
d$ if and only if $\trdeg_{E}F > 0$ and there exists an infinite
descending chain of intermediate $\Delta$-$\sigma$-fields
\begin{equation} F = F_{0}\supseteq F_{1}\supseteq \dots
\supseteq F_{r}\supseteq \dots \supseteq E\end{equation} such that
\begin{equation} \mu_{\mathfrak{U}}(F_{i}, F_{i+1})\geq d-1\, \,\,\, (i = 0, 1,
\dots).\end{equation}
\end{proposition}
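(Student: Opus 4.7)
The plan is to build $\mu_{\mathfrak{U}}$ by recursion on the parameter $d$, using condition (ii) as the defining equation. Concretely, I define a decreasing family of subsets $\mathfrak{S}_{d}\subseteq \mathfrak{B}_{\mathfrak{U}}$ indexed by $d\in\mathbb{Z}_{\geq -1}$ as follows: set $\mathfrak{S}_{-1}=\mathfrak{B}_{\mathfrak{U}}$, and for $d\geq 0$, assuming $\mathfrak{S}_{d-1}$ is already defined, let $\mathfrak{S}_{d}$ be the set of pairs $(F,E)\in\mathfrak{B}_{\mathfrak{U}}$ with $\trdeg_{E}F>0$ for which there exists an infinite descending chain $F=F_{0}\supseteq F_{1}\supseteq\dots\supseteq E$ of intermediate $\Delta$-$\sigma$-fields such that $(F_{i},F_{i+1})\in\mathfrak{S}_{d-1}$ for every $i\geq 0$. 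A short induction on $d$ shows that $\mathfrak{S}_{d}\subseteq\mathfrak{S}_{d-1}$, so the family is indeed decreasing.

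Next I set
$$\mu_{\mathfrak{U}}(F,E)=\sup\{d\in\mathbb{Z}_{\geq -1}\,|\,(F,E)\in\mathfrak{S}_{d}\},$$
interpreting the supremum in $\overline{\mathbb{Z}}$ and assigning the value $\infty$ when the set is unbounded. Property (i) is immediate from $\mathfrak{S}_{-1}=\mathfrak{B}_{\mathfrak{U}}$. For (ii), the monotonicity of $\{\mathfrak{S}_{d}\}$ implies that $\mu_{\mathfrak{U}}(F,E)\geq d$ is equivalent to $(F,E)\in\mathfrak{S}_{d}$; unfolding this membership by the recursion and rewriting $(F_{i},F_{i+1})\in\mathfrak{S}_{d-1}$ as $\mu_{\mathfrak{U}}(F_{i},F_{i+1})\geq d-1$ yields the required biconditional.

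For uniqueness, suppose $\nu:\mathfrak{B}_{\mathfrak{U}}\rightarrow\overline{\mathbb{Z}}$ is any map satisfying (i) and (ii). I claim $\{(F,E)\,|\,\nu(F,E)\geq d\}=\mathfrak{S}_{d}$ for every $d\geq -1$, which forces $\nu=\mu_{\mathfrak{U}}$. The base case $d=-1$ is condition (i) together with $\mathfrak{S}_{-1}=\mathfrak{B}_{\mathfrak{U}}$. For the inductive step, condition (ii) for $\nu$ translates membership in $\{(F,E)\,|\,\nu(F,E)\geq d\}$ into the existence of a chain with $\nu(F_{i},F_{i+1})\geq d-1$, and the induction hypothesis identifies these pairs with $\mathfrak{S}_{d-1}$; the resulting condition is exactly membership in $\mathfrak{S}_{d}$.

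The only point requiring care (rather than a genuine obstacle) is verifying that the recursion defining $\mathfrak{S}_{d}$ is well-founded and non-circular: the definition of $\mathfrak{S}_{d}$ refers only to $\mathfrak{S}_{d-1}$, and the verification of (ii) for the constructed $\mu_{\mathfrak{U}}$ cleanly separates into a ``$\geq$'' and ``$\leq$'' direction thanks to the monotonicity of the $\mathfrak{S}_{d}$, so one never has to invoke $\mu_{\mathfrak{U}}$ in its own definition. The rest of the argument is formal.
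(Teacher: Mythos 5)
Your proof is correct and follows essentially the same route as the paper, which constructs $\mu_{\mathfrak{U}}$ by the very recursion on $d$ you describe (citing Johnson's argument for chains of prime differential ideals) and then notes uniqueness; your explicit decreasing sets $\mathfrak{S}_{d}$ and the identification $\{\,(F,E)\,|\,\mu_{\mathfrak{U}}(F,E)\geq d\,\}=\mathfrak{S}_{d}$ merely make that recursion precise and remove the apparent self-reference in the paper's terse definition. No changes are needed.
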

\begin{proof} In order to show the existence and uniqueness of the
desired mapping $\mu_{\mathfrak{U}}$, one can just mimic the proof
of the corresponding statement for chains of prime differential
ideals given in \cite[Section 1]{Johnson2} (see also
\cite[Proposition 4.1]{Levin4} and \cite[Section 4]{Levin6} where
similar arguments were applied to differential and inversive
difference field extensions, respectively). Namely, let us set
$\mu_{\mathfrak{U}}(F, E) = -1$ if $F=E$ or the field extension
$F/E$ is algebraic. If $(F, E)\in \mathfrak{B}_{\mathfrak{U}}$,
$\trdeg_{E}F > 0$ and for every $d\in\mathbb{N}$, there exists a
chain of intermediate $\Delta$-$\sigma$-fields (1) with condition
(2), we set $\mu_{\mathfrak{U}}(F, E) = \infty$. Otherwise, we
define $\mu_{\mathfrak{U}}(F, E)$ as the maximal integer $d$ for
which condition (ii) holds (that is, $\mu_{\mathfrak{U}}(F, E)\geq
d$). It is clear that the mapping $\mu_{\mathfrak{U}}$ defined in
this way is unique. \, $\square$
\end{proof}

With the notation of the last proposition, we define the {\em type}
of a $\Delta$-$\sigma$-field extension $L/K$ as the integer
\begin{equation} \type(L/K) =
\sup\{\mu_{\mathfrak{U}}(F, E)\,|\,(F, E)\in
\mathfrak{B}_{\mathfrak{U}}\}.
\end{equation}
and the {\em dimension} of the
$\Delta$-$\sigma$-extension $L/K$ as the number

\medskip

\noindent $\dim(L/K) = \sup\{q\in \mathbb{N}\,|\,\text{there exists
a chain}\,\, F_{0}\supseteq F_{1}\supseteq \dots \supseteq F_{q}$
such that $F_{i}\in \mathfrak{U}$ and
\begin{equation}
\mu_{\mathfrak{U}}(F_{i-1}, F_{i}) = \type(L/K) \hspace{0.2in} (i=
1,\dots, q)\}.
\end{equation}

\medskip

It is easy to see that for any pair of intermediate
$\Delta$-$\sigma$-fields of $L/K$ such that $(F, E)\in
\mathfrak{B}_{\mathfrak{U}}$, $\mu_{\mathfrak{U}}(F, E) = -1$ if and
only if the field extension $E/F$ is algebraic. It is also clear
that if $\type(L/K) < \infty$, then $\dim(L/K) > 0$.

\begin{proposition} With the above notation, let $F$ and $E$ be
intermediate $\Delta$-$\sigma$-fields of
a $\Delta$-$\sigma$-field extension $L =K\langle\eta_{1},\dots,
\eta_{s}\rangle$ generated by a finite set $\eta=\{\eta_{1},\dots,
\eta_{s}\}$. Let $F\supseteq E$, so that $(F, E)\in
\mathfrak{B}_{\mathfrak{U}}$. Then for any integer $d\geq -1$, the
inequality $\mu_{\mathfrak{U}}(F, E)\geq d$ implies the inequality
$\deg(\chi_{K, F,\eta}(t) - \chi_{K,E,\eta}(t))\geq d.$ ($\chi_{K,
F,\eta}(t)$ and $\chi_{K,E,\eta}(t)$ are the
$\Delta$-$\sigma$-dimensions polynomials of the fields $F$ and $E$
associated with the set of $\Delta$-$\sigma$-generators $\eta$ of
$L/K$.)
\end{proposition}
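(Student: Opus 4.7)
The plan is to argue by induction on $d \geq -1$, closely following the template used for the analogous statements about prime differential ideals in \cite{Johnson2} and for differential and inversive difference field extensions in \cite{Levin4} and \cite{Levin6}. First I would dispose of the base cases. For $d = -1$ the conclusion is vacuous, while for $d = 0$ the hypothesis $\mu_{\mathfrak{U}}(F,E) \geq 0$ says $\trdeg_{E}F > 0$; picking any $x \in F$ transcendental over $E$, the inclusion $x \in F_{r}$ for large enough $r$ together with $E_{r}\subseteq E$ yields $\trdeg_{K}F_{r} > \trdeg_{K}E_{r}$ eventually, so $\chi_{K,F,\eta} - \chi_{K,E,\eta}$ is positive on all sufficiently large integers and hence has degree $\geq 0$.

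For the inductive step I would assume the result for $d-1$, fix $d \geq 1$, and use Proposition 1(ii) to extract an infinite descending chain $F = F_{0}\supseteq F_{1}\supseteq\dots\supseteq E$ of intermediate $\Delta$-$\sigma$-fields with $\mu_{\mathfrak{U}}(F_{i}, F_{i+1}) \geq d-1$ for all $i$. The inductive hypothesis then gives $\deg(\chi_{K,F_{i},\eta} - \chi_{K,F_{i+1},\eta}) \geq d-1$ for every $i$. Set $Q = \chi_{K,F,\eta} - \chi_{K,E,\eta}$ and $P_{N} = \chi_{K,F,\eta} - \chi_{K,F_{N},\eta}$, and suppose for contradiction that $\deg Q < d$. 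The monotonicity of $\trdeg_{K}(\cdot)_{r}$ along the chain $E \subseteq F_{i+1} \subseteq F_{i} \subseteq F$ forces the polynomial inequalities $0 \leq \chi_{K,F_{i},\eta} - \chi_{K,F_{i+1},\eta} \leq Q$ on all sufficiently large integers, which combined with the inductive lower bound pins $\deg(\chi_{K,F_{i},\eta} - \chi_{K,F_{i+1},\eta}) = d-1$ exactly.

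The contradiction then comes from the basis $\binom{t+i}{i}$ of Theorem 2(ii): a non-zero numerical polynomial of degree exactly $d-1$ that is eventually non-negative has a \emph{positive integer} coefficient of $\binom{t+d-1}{d-1}$, hence that coefficient is at least $1$. Writing $P_{N} = \sum_{i=0}^{N-1}(\chi_{K,F_{i},\eta} - \chi_{K,F_{i+1},\eta})$, the coefficient of $\binom{t+d-1}{d-1}$ in $P_{N}$ is therefore at least $N$, while the inequality $P_{N} \leq Q$ on large integers bounds the same coefficient by a constant depending only on $Q$; taking $N$ large enough yields the desired contradiction and closes the induction. I expect the principal subtlety to be precisely the last step: one must carefully verify that eventual non-negativity of a numerical polynomial of exact degree $e$ translates into strict positivity of its top coefficient in the $\binom{t+e}{e}$-basis, a standard but non-obvious fact about numerical polynomials that I would invoke from \cite[Chapter 2]{KLMP} rather than re-derive here.
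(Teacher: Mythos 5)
Your proof is correct and follows essentially the same route as the paper: induction on $d$, telescoping the differences $\chi_{K,F_i,\eta}-\chi_{K,F_{i+1},\eta}$ along the chain from Proposition 1(ii), and deriving a contradiction from the fact that each of these differences has a positive integer coefficient of $\binom{t+d-1}{d-1}$, so the partial sums' leading coefficients grow without bound while being capped by the corresponding coefficient of $\chi_{K,F,\eta}-\chi_{K,E,\eta}$. The only cosmetic difference is that you frame the step as a contradiction with $\deg Q<d$, whereas the paper splits into the two cases (some consecutive difference already has degree $\geq d$, or all have degree exactly $d-1$) before running the same divergence argument.
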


\begin{proof} We proceed by induction on $d$. Since
$\deg(\chi_{K,F,\eta}(t) - \chi_{K,E,\eta}(t))\geq -1$ for any pair
$(F, E)\in \mathfrak{B}_{\mathfrak{U}}$ and $\deg(\chi_{K,F,\eta}(t)
- \chi_{K,E,\eta}(t))\geq 0$ if $\trdeg_{E}F > 0$, our statement is
true for $d=-1$ and $d=0$. (As usual we assume that the degree of
the zero polynomial is $-1$.)

Let $d > 0$ and let  the statement be true for all nonnegative
integers less than $d$. Let $\mu_{\mathfrak{U}}(F, E)\geq d$ for
some pair $(F, E)\in \mathfrak{B}_{\mathfrak{U}}$, so that there
exists a chain of intermediate $\Delta$-$\sigma$-fields (1) such
that $\mu_{\mathfrak{U}}(F_{i}, F_{i+1})\geq d-1$ ($i=0, 1,\dots $).
If $\deg(\chi_{K,F_{i},\eta}(t) - \chi_{K,F_{i+1},\eta}(t))\geq d$
for some $i\in\mathbb{N}$, then $\deg(\chi_{K,F,\eta}(t) -
\chi_{K,E,\eta}(t))\geq \deg(\chi_{K,F_{i},\eta}(t) -
\chi_{K,F_{i+1},\eta}(t))\geq d$, so the statement of the
proposition is true.

Suppose that $\deg(\chi_{K,F_{i},\eta}(t) -
\chi_{K,F_{i+1},\eta}(t)) = d-1$ for every $i\in\mathbb{N}$, that
is, $\chi_{K,F_{i},\eta}(t) - \chi_{K,F_{i+1},\eta}(t) =
\D\sum_{j=0}^{d-1}a_{j}^{(i)}{{t+j}\choose j}$ where
$a_{0}^{(1)},\dots, a_{d-1}^{(i)}\in\mathbb{Z}$, $a_{d-1}^{(i)}> 0$.
Then $$\chi_{K,F,\eta}(t) - \chi_{K,F_{i+1},\eta}(t) =
\sum_{k=0}^{i}(\chi_{K,F_{k},\eta}(t) - \chi_{K,F_{k+1},\eta}(t)) =
\sum_{j=0}^{d-1}b_{j}^{(i)}{{t+j}\choose j}$$ where
$b_{0}^{(i)},\dots, b_{d-1}^{(i)}\in\mathbb{Z}$ and $b_{d-1}^{(i)} =
\sum_{k=0}^{i}a_{d-1}^{(k)}$. Therefore, $b_{d-1}^{(0)} <
b_{d-1}^{(1)} <\dots $ and $\lim_{i\rightarrow \infty}b_{d-1}^{(i)}
= \infty$. On the other hand, $\deg(\chi_{K,F,\eta}(t) -
\chi_{K,F_{i+1},\eta}(t))\leq \deg(\chi_{K,F,\eta}(t) -
\chi_{K,E,\eta}(t))$. If  $\deg(\chi_{K,F,\eta}(t) -
\chi_{K,E,\eta}(t)) = d-1$, that is, $\chi_{K,F,\eta}(t) -
\chi_{K,E,\eta}(t)= \D\sum_{j=0}^{d-1}c_{j}{{t+j}\choose j}$ for
some $c_{0},\dots, c_{d-1}\in\mathbb{Z}$, then we would have
$b_{d-1}^{(i)} < c_{d-1}$ for all $i\in\mathbb{N}$ contrary to the
fact that $\lim_{i\rightarrow \infty}b_{d-1}^{(i)} = \infty$. Thus,
$\deg(\chi_{K,F,\eta}(t) - \chi_{K,E,\eta}(t))\geq d$, so the
proposition is proved. \, $\square$
\end{proof}

The following theorem  provides a relationship between the
introduced characteristics of a finitely generated
$\Delta$-$\sigma$-extension and the invariants of its
$\Delta$-$\sigma$-dimension polynomial introduced by Theorem 2.

\begin{theorem}
Let $K$ be an inversive difference-differential ($\Delta$-$\sigma$-)
field with basic set $\Delta\bigcup\sigma$ where $\Delta =
\{\delta_{1},\dots, \delta_{m}\}$ and $\sigma = \{\alpha_{1},\dots,
\alpha_{n}\}$ are the sets of derivations and automorphisms of $K$,
respectively.  Let $L$ be a finitely generated
$\Delta$-$\sigma$-field extension of $K$. Then

\noindent{\em (i)}\, $\type(L/K)\leq\Delta$-$\sigma$-$\type_{K}L\leq
m+n$.

\noindent{\em (ii)}\, If $\Delta$-$\sigma$-$\trdeg_{K}L > 0$, then
$\type(L/K) = m+n$, $\dim(L/K) = \Delta$-$\sigma$-$\trdeg_{K}L$.

\noindent{\em (iii)}\, If $\Delta$-$\sigma$-$\trdeg_{K}L = 0$, then
$\type(L/K) < m+n$.
\end{theorem}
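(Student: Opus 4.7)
The plan is to derive parts (i) and (iii) directly from Theorem~2, Theorem~1(iii) and Proposition~2; in part (ii) only the lower bounds $\type(L/K)\geq m+n$ and $\dim(L/K)\geq\Delta$-$\sigma$-$\trdeg_{K}L$ require real work, and supplying them is the main obstacle. Throughout I will write $\chi=\chi_{\eta|K}=\chi_{K,L,\eta}$ and use Theorem~1(iii), which states that the coefficient of $\binom{t+m+n}{m+n}$ in the binomial expansion of $\chi$ equals $\Delta$-$\sigma$-$\trdeg_{K}L$.

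For part (i), the estimate $\Delta$-$\sigma$-$\type_{K}L\leq m+n$ is exactly Theorem~2(ii). For the remaining inequality, any pair $(F,E)\in\mathfrak{B}_{\mathfrak{U}}$ satisfies $E_{r}\subseteq F_{r}\subseteq K(\Lambda(r)\eta)$, whence $0\leq \chi_{K,E,\eta}(r)\leq \chi_{K,F,\eta}(r)\leq \chi(r)$ for all sufficiently large $r$; the polynomial $\chi_{K,F,\eta}-\chi_{K,E,\eta}$ is therefore eventually nonnegative and dominated by $\chi$, so $\deg(\chi_{K,F,\eta}-\chi_{K,E,\eta})\leq\deg\chi=\Delta$-$\sigma$-$\type_{K}L$, and Proposition~2 bounds every $\mu_{\mathfrak{U}}(F,E)$ by this quantity. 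Part (iii) is then immediate: when $\Delta$-$\sigma$-$\trdeg_{K}L=0$ the leading binomial coefficient of $\chi$ vanishes, so $\deg\chi\leq m+n-1$ and (i) forces $\type(L/K)<m+n$.

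In part (ii) the hypothesis $\Delta$-$\sigma$-$\trdeg_{K}L>0$ gives $\deg\chi=m+n$, so (i) already supplies $\type(L/K)\leq m+n$. For the upper bound $\dim(L/K)\leq \Delta$-$\sigma$-$\trdeg_{K}L$, take any chain $F_{0}\supsetneq\cdots\supsetneq F_{q}$ of intermediate $\Delta$-$\sigma$-fields realising $\mu_{\mathfrak{U}}(F_{i-1},F_{i})=m+n$ at every step: by Proposition~2 each polynomial $\chi_{K,F_{i-1},\eta}-\chi_{K,F_{i},\eta}$ has degree exactly $m+n$, so (being eventually nonnegative) its integer coefficient at $\binom{t+m+n}{m+n}$ is at least $1$; by Theorem~2(iii) this coefficient equals $\Delta$-$\sigma$-$\trdeg_{K}F_{i-1}-\Delta$-$\sigma$-$\trdeg_{K}F_{i}$, and the telescoping sum yields $q\leq \Delta$-$\sigma$-$\trdeg_{K}F_{0}\leq \Delta$-$\sigma$-$\trdeg_{K}L$.

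The matching lower bound on $\dim(L/K)$ (which subsumes $\type(L/K)\geq m+n$) is the main obstacle. Fixing a $\Delta$-$\sigma$-transcendence basis $\zeta_{1},\dots,\zeta_{k}$ of $L/K$ (with $k=\Delta$-$\sigma$-$\trdeg_{K}L$), set $F_{i}=K\langle\zeta_{1},\dots,\zeta_{k-i}\rangle$: the element $\zeta_{k-i+1}$ is $\Delta$-$\sigma$-transcendental over $F_{i}$, so $F_{i-1}$ is a simple purely $\Delta$-$\sigma$-transcendental extension of $F_{i}$ and $\trdeg_{F_{i}}F_{i-1}>0$. To verify $\mu_{\mathfrak{U}}(F_{i-1},F_{i})\geq m+n$ one unwinds the recursive definition: inside $F_{i}\langle y\rangle/F_{i}$ (with $y=\zeta_{k-i+1}$) one must build an infinite descending chain $F_{i}\langle y\rangle=G_{0}\supsetneq G_{1}\supsetneq\cdots\supseteq F_{i}$ of intermediate $\Delta$-$\sigma$-fields with $\mu_{\mathfrak{U}}(G_{j},G_{j+1})\geq m+n-1$, inside each such gap a further chain with $\mu\geq m+n-2$, and so on down to the atomic level $\mu\geq 0$, which is just positive relative transcendence degree. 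The recursion of depth $m+n$ is carried out by enumerating the operators in $\Delta\cup\sigma$ as $\tau_{1},\dots,\tau_{m+n}$ and, at depth $j$, passing from a current generator $\lambda y$ to $\lambda\tau_{j}^{i}y$ and letting $i\to\infty$; algebraic independence of the orbit $\{\lambda y:\lambda\in\Lambda\}$ over $K$ forces each such substitution to shrink the field strictly while preserving positive relative $\Delta$-$\sigma$-transcendence degree, so the required chains at every recursive level exist. The construction is modelled on the analogous arguments for chains of prime differential ideals in \cite[Section~1]{Johnson2} and on their differential and inversive-difference analogues in \cite[Proposition~4.1]{Levin4} and \cite[Section~4]{Levin6}.
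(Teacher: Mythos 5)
Your overall route is the same as the paper's: parts (i) and (iii) follow from the proposition comparing $\mu_{\mathfrak{U}}$ with degrees of differences of dimension polynomials (this is Proposition 3 in the paper, not Proposition 2, which only establishes the existence of $\mu_{\mathfrak{U}}$) together with the bound $\deg(\chi_{K,F,\eta}-\chi_{K,E,\eta})\leq \deg\chi_{K,L,\eta}\leq m+n$ and the identification of the coefficient of $\binom{t+m+n}{m+n}$ with the $\Delta$-$\sigma$-transcendence degree; and your upper bound $\dim(L/K)\leq\Delta$-$\sigma$-$\trdeg_{K}L$ by telescoping the top binomial coefficients along a chain with $\mu_{\mathfrak{U}}=m+n$ is exactly the paper's argument, phrased slightly more cleanly through Theorem 2(iii). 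Those parts are correct.

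The gap is in the lower-bound construction, which you yourself flag as the main obstacle and then settle by assertion. As literally described --- at depth $j$ pass from a current generator $\lambda y$ to $\lambda\tau_{j}^{i}y$ and let $i\to\infty$ --- the fields of the depth-$j$ chain do not contain the lower field of the depth-$(j-1)$ pair they are supposed to interpolate, whereas the definition of $\mu_{\mathfrak{U}}(G,G')\geq d$ requires every member of the witnessing chain to contain $G'$. Concretely, for the pair $\bigl(K\langle\delta_{1}^{a}x\rangle,\,K\langle\delta_{1}^{a+1}x\rangle\bigr)$ the fields $K\langle\delta_{1}^{a}\delta_{2}^{b}x\rangle$ do not contain $\delta_{1}^{a+1}x$ (by the algebraic independence of the orbit), so this chain does not stay above $K\langle\delta_{1}^{a+1}x\rangle$ and certifies nothing. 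The paper repairs precisely this by carrying the incremented generators along in every field of the deeper chain, e.g.\ $K\langle\delta_{1}^{a+1}x,\ \delta_{1}^{a}\delta_{2}^{b}x\rangle$, and this bookkeeping has to propagate through all $m+n$ levels; the explicitly displayed chains are exactly that content, and without them your closing claim that ``the required chains at every recursive level exist'' is the statement to be proved rather than a consequence of orbit independence alone. (Your use of raw powers $\alpha_{j}^{i}$ where the paper uses $(\alpha_{j}-1)^{i}$ is not the problem: since intermediate $\Delta$-$\sigma$-fields are not required to be inversive, chains built from $\alpha_{j}^{i}y$ do descend strictly; it is the containment bookkeeping that your sketch omits.)
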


\begin{proof}
Let $\eta = \{\eta_{1},\dots, \eta_{s}\}$ be a system of
$\Delta$-$\sigma$-generators of $L$ over $K$ and for every
$r\in\mathbb{N}$, let $L_{r} =
K(\{\lambda\eta_{i}\,|\,\lambda\in\Lambda(r),  1\leq i\leq s\})$.
Furthermore, if $F$ is any intermediate $\Delta$-$\sigma$-field of
the extension $L/K$, then $F_{r}$ ($r\in\mathbb{N}$) will denote the
field $F\bigcap L_{r}$. By Theorem 2, there is a polynomial
$\chi_{K, F, \eta}(t)\in \mathbb{Q}[t]$ such that
$\chi_{K,F,\eta}(r) = \trdeg_{K}F_{r}$ for all sufficiently large
$r\in\mathbb{N}$, $\deg\,\chi_{K, F, \eta}\leq m+n$, and this
polynomial can be written as $\chi_{K, F, \eta}(t) =
\sum_{i=1}^{m+n}a_{i}{t+i\choose i}$ where $a_{0},\dots, a_{m+n}\in
\mathbb{Z}$ and $a_{m+n} = \Delta$-$\sigma$-$\trdeg_{K}F$. Clearly,
if $E$ and $F$ are two intermediate $\Delta$-$\sigma$-fields of
$L/K$ and $F\supseteq E$, then $\chi_{K,F,\eta}(t)\geq
\chi_{K,E,\eta}(t)$. (This inequality means that $\chi_{F}(r)\geq
\chi_{E}(r)$ for all sufficiently large $r\in \mathbb{N}$. As it is
first shown in \cite{Sit}, the set $W$ of all differential dimension
polynomials of finitely generated differential field extensions is
well ordered with respect to this ordering. At the same time, as it
is proved in \cite[Chapter 2]{KLMP}, $W$ is also the set of all
$\Delta$-$\sigma$-dimension polynomials associated with finitely
generated $\Delta$-$\sigma$-field extensions).

Note that if $F\supseteq E$ and $\chi_{K, F,\eta}(t) = \chi_{K, E,
\eta}(t)$, then the field extension $F/E$ is algebraic. Indeed, if
$x\in F$ is transcendental over $E$, then there exists
$r_{0}\in\mathbb{N}$ such that $x\in F_{r}$ for all $r\geq r_{0}$.
Therefore, $\trdeg_{K}F_{r} = \trdeg_{K}E_{r} + \trdeg_{E_{r}}F_{r}
> \trdeg_{K}E_{r}$ for all $r\geq r_{0}$ hence $\chi_{K, F,\eta}(t)>
\chi_{K, E,\eta}(t)$ contrary to our assumption.

Since $\deg(\chi_{K,F,\eta}(t) - \chi_{K,E,\eta}(t))\leq m+n$ for
any pair $(F, E)\in \mathfrak{B}_{\mathfrak{U}}$, the last
proposition implies that $\type(L/K)\leq\Delta$-$\sigma$-$\type_{K}L
\leq m+n$. If $\Delta$-$\sigma$-$\trdeg_{K}L = 0$, then
$\type(L/K)\leq\Delta$-$\sigma$-$\type_{K}L < m+n$. Thus, it remains
to prove statement (ii) of the theorem.

Let $\Delta$-$\sigma$-$\trdeg_{K}L > 0$, let element $x\in L$ be
$\Delta$-$\sigma$-transcendental over $K$ and let $F = K\langle
x\rangle$.  Clearly, in order to prove that $\type(L/K) = m+n$ it is
sufficient to show that $\mu_{\mathfrak{U}}(F, K)\geq m+n$. This
inequality, in turn, immediately follows from the consideration of
the following $m+n$ strictly descending chains of intermediate
$\Delta$-$\sigma$-fields of $F/K$.

\medskip

\noindent$F=K\langle x\rangle\supset K\langle\delta_{1}
x\rangle\supset K\langle\delta_{1}^{2}x\rangle\supset\dots\supset
K\langle\delta_{1}^{i_{1}}x\rangle\supset
K\langle\delta_{1}^{i_{1}+1}x\rangle\supset\dots \supset K$\,,

\medskip

\noindent$K\langle\delta_{1}^{i_{1}}x\rangle\supset
K\langle\delta_{1}^{i_{1}+1}x,
\delta_{1}^{i_{1}}\delta_{2}x\rangle\supset
K\langle\delta_{1}^{i_{1}+1}x,
\delta_{1}^{i_{1}}\delta_{2}^{2}x\rangle\supset\dots
K\langle\delta_{1}^{i_{1}+1}x,
\delta_{1}^{i_{1}}\delta_{2}^{i_{2}}x\rangle\supset
K\langle\delta_{1}^{i_{1}+1}x,
\delta_{1}^{i_{1}}\delta_{2}^{i_{2}+1}x\rangle\supset\dots\supset
K\langle\delta_{1}^{i_{1}+1}x\rangle$\,,

\medskip

\centerline{$\dots$}

\medskip

\noindent $K\langle\delta_{1}^{i_{1}+1}x,
\delta_{1}^{i_{1}+1}\delta_{2}^{i_{2}+1}x,\dots,
\delta_{1}^{i_{1}+1}\dots\delta_{m-1}^{i_{m-1}+1}x,
\delta_{1}^{i_{1}+1}\dots\delta_{m-1}^{i_{m-1}+1}\delta_{m}^{i_{m}}x\rangle\supset
K\langle\delta_{1}^{i_{1}+1}x, $


\noindent
$,\dots,\delta_{1}^{i_{1}+1}\dots\delta_{m-1}^{i_{m-1}+1}x,
\delta_{1}^{i_{1}+1}\dots\delta_{m-1}^{i_{m-1}+1}\delta_{m}^{i_{m}+1}x,
\delta_{1}^{i_{1}+1}\dots\delta_{m-1}^{i_{m-1}+1}\delta_{m}^{i_{m}}(\alpha_{1}-1)x\rangle\supset$


\noindent $\supset\dots\supset K\langle\delta_{1}^{i_{1}+1}x, \dots,
\delta_{1}^{i_{1}+1}\dots\delta_{m-1}^{i_{m-1}+1}x,
\delta_{1}^{i_{1}+1}\dots\delta_{m-1}^{i_{m-1}+1}\delta_{m}^{i_{m}}(\alpha_{1}-1)^{2}x\rangle\supset$


\noindent $\supset\dots\supset K\langle\delta_{1}^{i_{1}+1}x, \dots,
\delta_{1}^{i_{1}+1}\dots\delta_{m-1}^{i_{m-1}+1}x,
\delta_{1}^{i_{1}+1}\dots\delta_{m-1}^{i_{m-1}+1}\delta_{m}^{i_{m}}(\alpha_{1}-1)^{i_{m+1}}x\rangle\supset$


\noindent $\dots\supset K\langle\delta_{1}^{i_{1}+1}x, \dots,
\delta_{1}^{i_{1}+1}\dots\delta_{m-1}^{i_{m-1}+1}x,
\delta_{1}^{i_{1}+1}\dots\delta_{m-1}^{i_{m-1}+1}\delta_{m}^{i_{m}}(\alpha_{1}-1)^{i_{m+1}+1}x\rangle\supset$

\noindent $\dots\supset K\langle\delta_{1}^{i_{1}+1}x,
\delta_{1}^{i_{1}+1}\delta_{2}^{i_{2}+1}x,\dots,
\delta_{1}^{i_{1}+1}\dots\delta_{m-1}^{i_{m-1}+1}x,
\delta_{1}^{i_{1}+1}\dots\delta_{m-1}^{i_{m-1}+1}\delta_{m}^{i_{m}+1}x\rangle$\,,

\medskip

\centerline{$\dots$}

\medskip

\noindent $K\langle\delta_{1}^{i_{1}+1}x,
\dots,\delta_{1}^{i_{1}+1}\dots\delta_{m-1}^{i_{m-1}+1}
\delta_{m}^{i_{m}+1}(\alpha_{1}-1)^{i_{m+1}+1}\dots
(\alpha_{n-1}-1)^{i_{m+n-1}}x\rangle\supset$


\noindent $K\langle\delta_{1}^{i_{1}+1}x, \dots,
\delta_{1}^{i_{1}+1}\dots\delta_{m-1}^{i_{m-1}+1}x,
\delta_{1}^{i_{1}+1}\dots\delta_{m-1}^{i_{m-1}+1}\delta_{m}^{i_{m}}(\alpha_{1}-1)^{i_{m+1}+1}\dots\\
(\alpha_{n-1}-1)^{i_{m+n-1}+1}(\alpha_{n}-1)x\rangle\supset\dots\supset
K\langle\delta_{1}^{i_{1}+1}x, \dots,
\delta_{1}^{i_{1}+1}\dots\delta_{m-1}^{i_{m-1}+1}x,
\delta_{1}^{i_{1}+1}\dots\\ \delta_{m-1}^{i_{m-1}+1}
\delta_{m}^{i_{m}}(\alpha_{1}-1)^{i_{m+1}+1}\dots
(\alpha_{n-1}-1)^{i_{m+n-1}+1}(\alpha_{n}-1)^{i_{m+n}}x\rangle\supset\dots\supset
K\langle\delta_{1}^{i_{1}+1}x,$

\noindent$ \dots,\delta_{1}^{i_{1}+1}\dots\delta_{m-1}^{i_{m-1}+1}x,
\delta_{1}^{i_{1}+1}\dots\delta_{m-1}^{i_{m-1}+1}\delta_{m}^{i_{m}}(\alpha_{1}-1)^{i_{m+1}+1}\dots
(\alpha_{n-1}-1)^{i_{m+n-1}+1}x\rangle$\,.

\medskip

These $m+n$ chains show that $\mu_{\mathfrak{U}}(F, K)\geq m+n$,
hence $\type(L/K) = m+n$. Furthermore, if
$\Delta$-$\sigma$-$\trdeg_{K}L = k > 0$ and $x_{1},\dots, x_{k}$ is
a $\Delta$-$\sigma$-transcendence basic of $L$ over $K$, then every
$x_{i}$ ($2\leq i\leq k$) is $\Delta$-$\sigma$-independent over
$K\langle x_{1},\dots, x_{i-1}\rangle$. Therefore, the above chains
show that $\mu_{\mathfrak{U}}(K\langle x_{1}\rangle, K) =
\mu_{\mathfrak{U}}(K\langle x_{1}, x_{2}\rangle, K\langle
x_{1}\rangle)=\dots = \mu_{\mathfrak{U}}(K\langle x_{1},\dots,
x_{k}\rangle, K\langle x_{1},\dots, x_{k-1}\rangle) = m+n,$ hence
$\dim(L/K)\geq k = \Delta$-$\sigma$-$\trdeg_{K}L$.

In order to prove the opposite inequality, suppose that
$F_{0}\supseteq F_{1}\supseteq \dots \supseteq F_{p}$ is an
ascending chain of intermediate $\Delta$-$\sigma$-fields of the
extension $L/K$ such that $\mu_{\mathfrak{U}}(F_{i}, F_{i+1}) =
\type(L/K) = m+n$ \,for\, $i= 0,\dots, p-1$. Clearly, in order to
prove our inequality, it is sufficient to show that $p\leq k$.

For every $i=0,\dots, p$, the $\Delta$-$\sigma$-dimension polynomial
$\chi_{K, F_{i}, \eta}(t)$, whose existence is established by
Theorem 2, can be written as $\chi_{K, F_{i},\eta}(t) =
\D\sum_{j=0}^{m+n}a_{j}^{(i)}{{t+j}\choose j}$ where $a_{j}^{(i)}\in
\mathbb{Z}$ ($0\leq i\leq p-1$, $0\leq j\leq m+n$). Then $\chi_{K,
F_{0},\eta}(t) - \chi_{K, F_{p},\eta}(t) = \D\sum_{i=1}^{p}(\chi_{K,
F_{i-1},\eta}(t) - \chi_{K, F_{i},\eta}(t)) =
\D\sum_{i=1}^{p}\sum_{j=0}^{m+n}(a_{j}^{(i-1)} - a_{j}^{(i)})
{{t+j}\choose j} = \\$ $(a_{m+n}^{(0)} - a_{m+n}^{(p)})
\D{{t+m+n}\choose m+n} + o(t^{m+n})$ where $o(t^{m+n})$ denotes a
polynomial of degree at most $m+n-1$.

Since $\mu_{\mathfrak{U}}(F_{i}, F_{i+1}) = m+n$ ($0\leq i\leq
p-1$), we have $\deg(\chi_{K, F_{i},\eta}(t) - \chi_{K,
F_{i+1},\eta}(t)) = m+n$ (see Proposition 3). Therefore,
$a_{m+n}^{(0)}
> a_{m+n}^{(1)} > \dots > a_{m+n}^{(p)}$, hence
$$a_{m+n}^{(0)} - a_{m+n}^{(q)} = \sum_{i=1}^{p}(a_{m+n}^{(i-1)} -
a_{m+n}^{(i)})\geq p.$$ On the other hand, $\chi_{K, F_{0},\eta}(t)
- \chi_{K, F_{p},\eta}(t)\leq \chi_{K, L, \eta}(t) =
\D\sum_{i=0}^{m+n}a_{i}{t+i\choose i}$ where $a_{m+n} =
\Delta$-$\sigma$-$\trdeg_{K}L$.  Therefore, $p\leq a_{m+n}^{(0)} -
a_{m+n}^{(p)}\leq k = \sigma$-$\trdeg_{K}L$. This completes the
proof of the theorem. \, $\square$
\end{proof}

\section{Multivariate dimension polynomials of intermediate
$\sigma^{\ast}$-field extensions}

In this section we present a result that generalizes both Theorem 2
and the theorem on multivariate dimension polynomial of a finitely
generated differential field extension associated with a partition
of the basic set of derivations, see \cite[Theorem 4.6]{Levin2a}.

Let $K$ be a difference-differential ($\Delta$-$\sigma$-) field with
basic sets $\Delta = \{\delta_{1},\dots, \delta_{m}\}$ and $\sigma =
\{\alpha_{1},\dots, \alpha_{n}\}$ of derivations and automorphisms,
respectively. Suppose that these sets are represented as the unions
of $p$ and $q$ nonempty disjoint subsets, respectively ($p, q\geq
1$):
\begin{equation}\label{eq:8}\Delta = \Delta_{1}\bigcup \dots \bigcup \Delta_{p}, \,\,\, \sigma = \sigma_{1}\bigcup \dots \bigcup \sigma_{q}, \end{equation}
$\Delta_{1} = \{\delta_{1},\dots, \delta_{m_{1}}\},\, \Delta_{2} =
\{\delta_{m_{1}+1},\dots, \delta_{m_{1}+m_{2}}\}, \,\dots,
\Delta_{p} = \{\delta_{m_{1}+\dots + m_{p-1}+1},$\\$\dots,
\delta_{m}\}$, \,\, $\sigma_{1} = \{\alpha_{1},\dots,
\alpha_{n_{1}}\},\, \sigma_{2} = \{\alpha_{n_{1}+1},\dots,
\alpha_{n_{1}+n_{2}}\}, \,\dots, $\\$\sigma_{q} =
\{\alpha_{n_{1}+\dots + n_{q-1}+1},\dots, \alpha_{n}\};$
($m_{1}+\dots + m_{p} = m;\, n_{1} + \dots + n_{q} = n$).

If \,\,$\lambda  = \delta_{1}^{k_{1}}\dots
\delta_{m}^{k_{m}}\alpha_{1}^{l_{1}}\dots
\alpha_{n}^{l_{n}}\in\Lambda$ ($k_{i}, l_{j}\in\mathbb{N}$; we use
the notation of section 2), then the order of $\lambda$ with respect
to a set $\Delta_{i}$ ($1\leq i\leq p$) is defined as $\sum_{\mu =
m_{1}+\dots + m_{i-1}+1}^{m_{1}+\dots + m_{i}}k_{\mu}$; it is
denoted by $\ord_{i}\lambda$. (If $i=1$, the last sum is replaced by
$\sum_{\mu=1}^{m_{1}}k_{\mu}$.) Similarly, the order of $\lambda$
with respect to a set $\sigma_{j}$ ($1\leq j\leq q$) is defined as
$\sum_{\nu = n_{1}+\dots + n_{j-1}+1}^{n_{1}+\dots + n_{j}}l_{\nu}$;
it is denoted by $\ord'_{j}\lambda$. (If $j=1$, the last sum is
replaced by $\sum_{\nu=1}^{n_{1}}l_{\nu}$.) If $r_{1},\dots,
r_{p+q}\in\mathbb{N}$, we set \,\,\, $\Lambda(r_{1},\dots, r_{p+q})
=\\$ \noindent$ \{\lambda\in\Lambda\,|\,\ord_{i}\lambda\leq r_{i}$
($1\leq i\leq p$) and $\ord'_{j}\lambda\leq r_{p+j}$ $(1\leq j\leq
q)\}.$

\smallskip

Furthermore, for any permutation $(j_{1},\dots, j_{p+q})$ of the set
$\{1,\dots, p+q\}$, let $<_{j_{1},\dots, j_{p+q}}$ be the
lexicographic order on $\mathbb{N}^{p+q}$ such that $(r_{1},\dots,
r_{p+q})<_{j_{1},\dots, j_{p+q}} (s_{1},\dots, s_{p+q})$ if and
only if either $r_{j_{1}} < s_{j_{1}}$ or there exists
$k\in\mathbb{N}$, $1\leq k\leq p+q$, such that $r_{j_{\nu}} =
s_{j_{\nu}}$ for $\nu = 1,\dots, k$ and $r_{j_{k+1}} < s_{j_{k+1}}$.

If $A\subseteq\mathbb{N}^{p+q}$, then $A'$ will denote the set of
all $(p+q)$-tuples $a\in A$ that are maximal elements of this set
with respect to one of the $(p+q)!$ orders $<_{j_{1},\dots,
j_{p+q}}$. Say, if $A = \{(1, 1, 1), (2, 3, 0), (0, 2, 3), (2, 0,
5),$ $(3, 3, 1), (4, 1, 1), (2, 3, 3)\} \subseteq \mathbb{N}^{3}$,
then $A' = \{(2, 0, 5), (3, 3, 1), (4, 1, 1), (2, 3, 3)\}$.

\begin{theorem} With the above notation, let $F$ be an
intermediate $\Delta$-$\sigma$-field of a $\Delta$-$\sigma$-field
extension $L=K\langle\eta_{1},\dots,\eta_{s}\rangle$ generated by a
finite family $\eta = \{\eta_{1},\dots,\eta_{s}\}$. Let partitions
(5) be fixed and for any $r_{1},\dots, r_{p+q}\in\mathbb{N}^{p+q}$,
let
$$F_{r_{1},\dots, r_{p+q}} = F\bigcap K(\D\bigcup_{j=1}^{s}
\Lambda(r_{1},\dots, r_{p+q})\eta_{j}).$$ Then there exists a
polynomial in $p+q$ variables $\Phi_{K,
F,\eta}\in\mathbb{Q}[t_{1},\dots, t_{p+q}]$ such that

\noindent{\em (i)} \,$\Phi_{K, F,\eta}(r_{1},\dots, r_{p+q}) =
\trdeg_{K}K(\D\bigcup_{j=1}^{s} \Lambda(r_{1},\dots,
r_{p+q})\eta_{j})$

\noindent for all sufficiently large
$(r_{1},\dots,r_{p+q})\in\mathbb{N}^{p+q}$. (That is, there exist
$r_{1}^{(0)},\dots, r_{p+q}^{(0)}\in\mathbb{N}$ such that the
equality holds for all $(r_{1},\dots, r_{p+q})\in\mathbb{N}^{p+q}$
with $r_{i}\geq r_{i}^{(0)}$, $1\leq i\leq p+q$.)         ;

\noindent{\em (ii)} \, $\deg_{t_{i}}\Phi_{\eta} \leq m_{i}$ ($1\leq
i\leq p$),\, $\deg_{t_{p+j}}\Phi_{\eta} \leq n_{j}$ ($1\leq j\leq
q$) and $\Phi_{\eta}(t_{1},\dots, t_{p+q})$ can be represented as
\begin{equation}
\Phi_{\eta} = \D\sum_{i_{1}=0}^{m_{1}}\dots
\D\sum_{i_{p}=0}^{m_{p}}\D\sum_{i_{p+1}=0}^{n_{1}}\dots\D\sum_{i_{p+q}=0}^{n_{q}}a_{i_{1}\dots
i_{p+q}} {t_{1}+i_{1}\choose i_{1}}\dots {t_{p+q}+i_{p+q}\choose
i_{p+q}}\end{equation} where $a_{i_{1}\dots i_{p+q}}\in\mathbb{Z}$.

\noindent{\em (iii)} \,Let $E_{\eta} = \{(i_{1},\dots,
i_{p+q})\in\mathbb{N}^{p+q}\,|\, 0\leq i_{k}\leq m_{k}$ for
$k=1,\dots, p$, $0\leq i_{p+j}\leq n_{j}$ for $j=1,\dots, q$, and
$a_{i_{1}\dots i_{p+q}}\neq 0\}$.  Then $d = \deg\,\Phi_{\eta}$,
$a_{m_{1}\dots m_{p}n_{1}\dots n_{q}}$, elements $(k_{1},\dots,
k_{p+q})\in E_{\eta}'$, the corresponding coefficients
$a_{k_{1}\dots k_{p+q}}$, and the coefficients of the terms of total
degree $d$ do not depend on the choice of the set of
$\Delta$-$\sigma$-generators $\eta$. Furthermore, $a_{m_{1}\dots
m_{p}n_{1}\dots n_{q}} = \Delta$-$\sigma$-$\trdeg_{K}L$.
\end{theorem}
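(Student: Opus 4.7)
The plan is to mimic the proof of Theorem~2, replacing the univariate filtration by the $(p+q)$-dimensional one coming from the partitions (5), and then invoking a multivariate analogue of Theorems~3 and~4 in place of their univariate versions. Concretely, let $L^{\ast} = K\langle\eta_{1},\dots,\eta_{s}\rangle^{\ast}$ be the inversive closure of $L$, and let $M = \Omega_{L^{\ast}|K}$ be the module of K\"ahler differentials, equipped with its natural structure of a $\Delta$-$\sigma^{\ast}$-$L^{\ast}$-module via $\delta(d\zeta) = d\delta(\zeta)$ and $\alpha(d\zeta) = d\alpha(\zeta)$. Define, for any $(r_{1},\dots,r_{p+q})\in\mathbb{N}^{p+q}$, the set $\Lambda^{\ast}(r_{1},\dots,r_{p+q})$ in the obvious way (with $\ord_{i}$ and $\ord'_{j}$ applied to $\Lambda^{\ast}$), and let $M'_{r_{1},\dots,r_{p+q}} = \sum_{i=1}^{s}\mathcal{E}_{r_{1},\dots,r_{p+q}}\,d\eta_{i}$ be the $L^{\ast}$-span of $\{\mu\,d\eta_{i}\mid \mu\in\Lambda^{\ast}(r_{1},\dots,r_{p+q}),\ 1\leq i\leq s\}$. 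This gives a natural multivariate filtration of the finitely generated $\Delta$-$\sigma^{\ast}$-$L^{\ast}$-module $M' = \sum_{i=1}^{s}\mathcal{E}\,d\eta_{i}$.

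Next, I would introduce the intermediate object tied to $F$. Let $N$ be the $\mathcal{D}$-submodule of $M$ generated by $\{d\zeta\mid \zeta\in F\}$, and for each $(r_{1},\dots,r_{p+q})\in\mathbb{N}^{p+q}$ let $N_{r_{1},\dots,r_{p+q}}$ be the $L$-subspace of $M$ generated by $\{d\zeta\mid\zeta\in F_{r_{1},\dots,r_{p+q}}\}$. As in the proof of Theorem~2, $(N_{r_{1},\dots,r_{p+q}})$ is a multivariate filtration of $N$ as a $\Delta$-$\sigma$-$L$-module, and the inclusion $N\hookrightarrow M'$ is a homomorphism of multi-filtered modules. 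Since the filtration of $M'$ is excellent (it is the natural filtration associated with the finite system $\{d\eta_{1},\dots,d\eta_{s}\}$), the multivariate analogue of Theorem~4 (Theorem~6.7.10 of \cite{KLMP}, in its multivariate version --- the same version that is used in \cite{Levin2a} and in Chapter~7 of \cite{KLMP} for multivariate dimension polynomials of difference-differential modules) forces the filtration $(N_{r_{1},\dots,r_{p+q}})$ to be excellent as well.

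At this point I can apply the multivariate existence theorem for dimension polynomials of finitely generated $\Delta$-$\sigma$-modules equipped with excellent multi-filtrations (the version of Theorem~3 in the setting of partitions (5), again available in \cite[Chapter~7]{KLMP} and used in \cite{Levin2a}). This yields a polynomial $\Phi_{K,F,\eta}(t_{1},\dots,t_{p+q})\in\mathbb{Q}[t_{1},\dots,t_{p+q}]$ satisfying $\Phi_{K,F,\eta}(r_{1},\dots,r_{p+q}) = \dim_{L}N_{r_{1},\dots,r_{p+q}}$ for all sufficiently large tuples, together with the degree bounds $\deg_{t_{i}}\Phi \leq m_{i}$ and $\deg_{t_{p+j}}\Phi \leq n_{j}$ and the representation (6). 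The translation from $\dim_{L}$ to $\trdeg_{K}$ is now verbatim the Kähler-differentials argument of Theorem~2: a finite subset of $F_{r_{1},\dots,r_{p+q}}$ is algebraically independent over $K$ if and only if its image in $N_{r_{1},\dots,r_{p+q}}$ is $L^{\ast}$-linearly independent, and (since the coefficients that would witness any dependence lie in $L$) equivalently if and only if it is $L$-linearly independent. Hence $\dim_{L}N_{r_{1},\dots,r_{p+q}} = \trdeg_{K}F_{r_{1},\dots,r_{p+q}}$, which gives (i) and (ii).

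The main obstacle is part (iii), namely the invariance under the choice of $\eta$ of the quantities $d=\deg\Phi_{\eta}$, the leading coefficient $a_{m_{1}\dots m_{p}n_{1}\dots n_{q}}$, the extreme exponents in $E'_{\eta}$ with their coefficients, and the sum of coefficients of monomials of total degree $d$. For these I would argue that replacing $\eta$ by a different finite system of $\Delta$-$\sigma$-generators corresponds, at the level of the multi-filtered $\mathcal{D}$-module $N$, to shifting the filtration by bounded amounts in each coordinate; such shifts are well known to preserve precisely these invariants of the multivariate Hilbert-type polynomial (they are invariants of the associated graded module, not of the specific filtration). This is exactly the invariance statement established, in slightly different but equivalent language, in \cite[Chapter~7]{KLMP} and in \cite[Theorem~4.6]{Levin2a}; once that abstract invariance is in hand, the identification $a_{m_{1}\dots m_{p}n_{1}\dots n_{q}} = \Delta\text{-}\sigma\text{-}\trdeg_{K}L$ follows by specializing to the case $F=L$ and reading off the leading coefficient from Theorem~1 (which matches the $a_{m+n}$ of that theorem).
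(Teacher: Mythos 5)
Your overall strategy is exactly the paper's: mimic the proof of Theorem 2, equip the module of K\"ahler differentials with the $(p+q)$-dimensional filtration coming from the partitions (5), transfer excellence to the induced filtration of the submodule $N$ generated by the differentials of elements of $F$, identify $\dim_{L}N_{r_{1},\dots,r_{p+q}}$ with $\trdeg_{K}F_{r_{1},\dots,r_{p+q}}$ via the independence criterion for differentials, and then invoke the multivariate existence theorem for dimension polynomials of excellently filtered $\Delta$-$\sigma$-modules. The paper happens to work directly with $\Omega_{L|K}$ and the ring of $\Delta$-$\sigma$-operators over $L$, citing \cite[Theorems 3.2.8 and 3.5.8]{Levin5} for the excellence transfer and the existence statement, whereas you pass to the inversive closure $L^{\ast}$ as in Theorem 2 and cite \cite[Chapter 7]{KLMP} and \cite{Levin2a}; that difference is immaterial, and your parts (i) and (ii) are in order.

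The genuine weak point is the last claim of (iii). As printed, $a_{m_{1}\dots m_{p}n_{1}\dots n_{q}} = \Delta$-$\sigma$-$\trdeg_{K}L$ parallels the slip in (i) (where $\trdeg_{K}F_{r_{1},\dots,r_{p+q}}$ is meant): for the polynomial attached to an intermediate field $F$ the leading coefficient must be $\Delta$-$\sigma$-$\trdeg_{K}F$, since for $F=K$ the polynomial is identically zero while $\trdeg_{K}L$ may be positive. Your argument --- specialize to $F=L$ and read the coefficient off Theorem 1 --- establishes the identity only in that single case and says nothing about an arbitrary intermediate $F$; moreover Theorem 1 is univariate, so even for $F=L$ an extra step is needed to pass from the total-degree coefficient $a_{m+n}$ to the coefficient $a_{m_{1}\dots m_{p}n_{1}\dots n_{q}}$ of the multivariate polynomial. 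The correct route, and the one the paper's references supply, is the multivariate analogue of Theorem 3(iii): the module-theoretic existence theorem (\cite[Theorem 3.5.8]{Levin5}, \cite[Chapter 7]{KLMP}) already identifies $a_{m_{1}\dots m_{p}n_{1}\dots n_{q}}$ with the $\Delta$-$\sigma$-dimension of the filtered module, here $\Delta$-$\sigma$-$\dim_{L}N$, and the K\"ahler-differentials correspondence you already use converts this into $\Delta$-$\sigma$-$\trdeg_{K}F$. Your bounded-shift argument for the invariance under change of generators of $d$, of $a_{m_{1}\dots m_{p}n_{1}\dots n_{q}}$, of the elements of $E'_{\eta}$ with their coefficients, and of the top-degree coefficients is fine and matches what the paper defers to \cite{Levin6}.
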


\begin{proof} We will mimic the method of the proof of Theorem 2 using
the results on multivariate dimension polynomials of
$\Delta$-$\sigma$-$L$-modules. Let $\cal{D}$ be the ring of
$\Delta$-$\sigma$-operators over $L$ considered as a filtered ring
with $(p+q)$-dimensional filtration $\{{\cal{D}}_{r_{1}, \dots,
r_{p+q}}\,|\,(r_{1}, \dots, r_{p+q})\in \mathbb{Z}^{p+q}\}$ where
for any $r_{1}, \dots, r_{p+q}\in \mathbb{N}^{p+q}$,
${\cal{D}}_{r_{1}, \dots, r_{p+q}}$ is the vector $L$-subspace of
$\cal{D}$ generated by $\Lambda(r_{1}, \dots, r_{p+q})$, and
${\cal{D}}_{r_{1}, \dots, r_{p+q}} = 0$ if at least one $r_{i}$ is
negative. If $M$ is a $\Delta$-$\sigma$-$L$-module, then a family
$\{M_{r_{1}, \dots, r_{p+q}} | (r_{1}, \dots,
r_{p+q})\in\mathbb{Z}^{p+q}\}$ of vector $K$-subspaces of $M$ is
said to be a $(p+q)$-dimensional filtration of $M$ if

\noindent(i)\, $M_{r_{1}, \dots, r_{p+q}}\subseteq M_{s_{1}, \dots,
s_{p+q}}$ whenever $r_{i}\leq s_{i}$ for $i=1,\dots, p+q$.

\noindent(ii)\, $\bigcup_{(r_{1}, \dots,
r_{p+q})\in\mathbb{Z}^{p+q}}M_{r_{1}, \dots, r_{p+q}} = M$.

\noindent(iii)\, There exists $(r^{(0)}_{1},\dots,
r^{(0)}_{p+q})\in\mathbb{Z}^{p}$ such that $M_{r_{1}, \dots,
r_{p+q}} = 0$ if $r_{i} < r^{(0)}_{i}$ for at least one index $i$.

\noindent(iv)\, ${\cal{D}}_{r_{1}, \dots, r_{p+q}}M_{s_{1}, \dots,
s_{p+q}}\subseteq M_{r_{1}+s_{1}, \dots, r_{p+q}+s_{p+q}}$ for any
$(p+q)$-tuples $(r_{1}, \dots, r_{p+q})$, $(s_{1}, \dots,
s_{p+q})\in\mathbb{Z}^{p+q}$,

\smallskip

If every vector $L$-space $M_{r_{1}, \dots, r_{p+q}}$ is
finite-dimensional and there exists an element $(h_{1}, \dots,
h_{p})\in\mathbb{Z}^{p}$ such that ${\cal{D}}_{r_{1}, \dots,
r_{p+q}}M_{h_{1}, \dots, h_{p+q}} = M_{r_{1}+h_{1}, \dots,
r_{p+q}+h_{p+q}}$ for any $(r_{1}, \dots,
r_{p+q})\in\mathbb{N}^{p+q}$, the filtration $\{M_{r_{1}, \dots,
r_{p+q}} | (r_{1}, \dots, r_{p+q})\in \mathbb{Z}^{p+q}\}$ is called
{\em excellent}. Clearly, if $z_{1},\dots, z_{k}$ is a finite system
of generators of a $\Delta$-$\sigma$-$L$-module $M$, then
$\{\sum_{i=1}^{k}{\cal{D}}_{r_{1}, \dots, r_{p+q}}z_{i} | (r_{1},
\dots, r_{p+q})\in \mathbb{Z}^{p+q}\}$  is an excellent
$(p+q)$-dimensional filtration of $M$.

\medskip

As we have seen, the module of K\"ahler differentials $\Omega_{L|K}$
can be treated as a $\Delta$-$\sigma$-$L$-module such that
$\beta(d\zeta) = d\beta(\zeta)$ for any $\zeta\in L$,
$\beta\in\Delta\bigcup\sigma$.

Furthermore, $\Omega_{L|K} = \sum_{i=1}^{s}\mathcal{D}d\eta_{i}$,
and if $(\Omega_{L|K})_{r_{1}\dots r_{p+q}}$\, ($r_{1},\dots ,
r_{p+q}\in \mathbb{N}$) is the vector $L$-subspace of $\Omega_{L|K}$
generated by the set $\{d\eta| \eta \in
K(\{\lambda\eta_{j}\,|\,\lambda\in\Lambda(r_{1},\dots, r_{p+q}),
1\leq j\leq s\})\}$ and $(\Omega_{L|K})_{r_{1}\dots r_{p+q}}=0$
whenever at least one $r_{i}$ is negative, then
$\{(\Omega_{L|K})_{r_{1}\dots r_{p+q}}\,|\,(r_{1},\dots ,
r_{p+q})\in\mathbb{Z}^{p+q}\}$ is an excellent $(p+q)$-dimensional
filtration of $\Omega_{L|K}$.

Let $N$ be the ${\cal{E}}$-submodule of $\Omega_{L|K}$ generated by
all elements $d\zeta$ where $\zeta\in F$. For any $r_{1},\dots,
r_{p+q}\in\mathbb{N}$, let $N_{r_{1},\dots, r_{p+q}}$ be the vector
$L$-space generated by all elements $d\zeta$ where $\zeta\in
F_{r_{1},\dots, r_{p+q}}$. Setting $N_{r_{1},\dots, r_{p+q}} = 0$ if
$(r_{1},\dots,
r_{p+q})\in\mathbb{Z}^{p+q}\setminus\mathbb{N}^{p+q}$, we get a
$(p+q)$-dimensional filtration of the $\Delta$-$\sigma$-$L$-module
$N$, and the embedding $N\rightarrow \Omega_{L|K}$ becomes a
homomorphism of $(p+q)$-filtered $\Delta$-$\sigma$-$L$-modules. Now,
one can mimic the proof of Theorem 3.2.8 of \cite{Levin5} to show
that the filtration $\{N_{r_{1}, \dots, r_{p+q}} | (r_{1}, \dots,
r_{p+q})\in \mathbb{Z}^{p+q}\}$ is excellent. The result of Theorem
6 immediately follows from the fact that $\dim_{L}N_{r_{1}, \dots,
r_{p+q}} = \trdeg_{K}F_{r_{1},\dots, r_{p+q}}$ for all
$(r_{1},\dots, r_{p+q})\in\mathbb{N}^{p+q}$ (as it is mentioned in
the proof of Theorem 2, a family $(\zeta_{i})_{i\in I}$ of elements
of $L$ (in particular, of $F_{r_{1}, \dots, r_{p+q}}$)  is
algebraically independent over $K$ if and only if the family
$(d\zeta_{i})_{i\in I}$ is linearly independent over $L$) and the
result of \cite[Theorem 3.5.8]{Levin5} (it states that under the
above conditions, there exists a polynomial $\Phi_{K, F,
\eta}(t_{1},\dots, t_{p+q})\in \mathbb{Q}[t_{1},\dots, t_{p+q}]$
such that $\Phi_{\eta}(r_{1},\dots, r_{p+q}) = \dim_{L}N_{r_{1},
\dots, r_{p+q}}$ for all sufficiently large $(r_{1}, \dots,
r_{p+q})\in \mathbb{Z}^{p+q}$ and $\Phi_{K, F, \eta}(t_{1},\dots,
t_{p+q})$ satisfies conditions (ii) of Theorem 6. Statement (iii) of
Theorem 6 can be obtained in the same way as statement (iii) of
Theorem 2 of \cite{Levin6}.)
\end{proof}

\end{document}